\newtheorem {proposition}{Proposition}[subsection]
\newtheorem {lemma}[proposition]{Lemma}
\newtheorem {theorem}[proposition]{Theorem}
\newtheorem {corollary}[proposition]{Corollary}
\theoremstyle{definition}
\newtheorem{remark}[proposition]{Remark}
\newtheorem{question}[proposition]{Question}
\newcommand{\Gothicd}{\mathfrak{h}_{\Gamma}^{*}}
\newcommand{\Gothice}{\mathfrak{h}_{\Gamma}^{\perp}}
\newenvironment{Dynkin}{\setlength{\unitlength}{1.5pt}\begin{array}{l}}{\end{array}}
\newcommand{\Dbloc}[1]{\begin{picture}(20,20)#1\end{picture}}
\newcommand{\Dcirc}{\put(10,10){\circle{4}}}
\newcommand{\Deast}{\put(12,10){\line(1,0){8}}}
\newcommand{\Dwest}{\put(8,10){\line(-1,0){8}}}
\newcommand{\Dnorth}{\put(10,12){\line(0,1){8}}}
\newcommand{\Dsouth}{\put(10,8){\line(0,-1){8}}}
\newcommand{\Ddoubleeast}{\put(10,12){\line(1,0){10}}\put(10,8){\line(1,0){10}}}
\newcommand{\Ddoublewest}{\put(10,12){\line(-1,0){10}}\put(10,8){\line(-1,0){10}}}
\newcommand{\Ddots}{\put(5,10){\circle*{0.9}}\put(10,10){\circle*{0.9}}\put(15,10){\circle*{0.9}}}
\newcommand{\Dtext}[2]{\makebox(20,20)[#1]{\scriptsize $#2$}}
\newcommand{\Dskip}{\\ [-4.5pt]}
\newcommand{\Dspace}{\Dbloc{}}
\renewcommand{\subsection}[1]%
{\refstepcounter{subsection}%
{\par\noindent\bf\thesubsection.~~}}
\begin{document}

\title[Affine slice for the coadjoint action]%
{Affine slice for the coadjoint action of a class of biparabolic subalgebras of a semisimple Lie algebra}

\author{Patrice Tauvel}
\address{UMR 6086 CNRS, D\'epartement de Math\'ematiques, Universit\'e de Poitiers, 
T\'el\'eport 2 - BP 30179, Boulevard Marie et Pierre Curie, 86962 Futuroscope Chasseneuil cedex, France.}
\email{Patrice.Tauvel@math.univ-poitiers.fr}

\author{Rupert W.T. Yu}
\address{UMR 6086 CNRS, D\'epartement de Math\'ematiques, Universit\'e de Poitiers, 
T\'el\'eport 2 - BP 30179, Boulevard Marie et Pierre Curie, 86962 Futuroscope Chasseneuil cedex, France.}
\email{Rupert.Yu@math.univ-poitiers.fr}

\begin{abstract}
In this article, we give a simple explicit construction of an affine slice for 
the coadjoint action of a certain class of biparabolic (also called seaweed) subalgebras of 
a semisimple Lie algebra over an algebraically closed field of characteristic zero. 
In particular, this class includes all Borel subalgebras. 
\end{abstract}

\maketitle

\section{Introduction}\label{sec:intro}

\subsection{}\label{definition}
Throughout this paper, $\Bbbk$ is an algebraically closed field of characteristic zero, 
All vector spaces and Lie algebras considered are defined over $\Bbbk$. We consider 
the Zariski topology on these spaces. If $X$ is an algebraic variety and $x\in X$, we 
denote by $\mathrm{T}_{x}(X)$ the tangent space of $X$ at $x$.

Let $\mathfrak{g}$ be a finite-dimensional Lie
algebra over $\Bbbk$ and $G$ its algebraic adjoint group.
Recall that $G$ and $\mathfrak{g}$ act naturally on $\mathfrak{g}^{*}$ via the coadjoint action. 
More precisely, for $f\in \mathfrak{g}^{*}$, $\sigma\in G$ and $X,Y\in\mathfrak{g}$, we have
$$
(\sigma.f) (Y) = f(\sigma^{-1}(Y)) \ , \  (X.f)(Y) = f([Y,X]).
$$

An {\bf affine slice} for the coadjoint action of $\mathfrak{g}$  
is an affine subspace $V$ of $\mathfrak{g}^{*}$ such that there exists an open subset $\mathcal{O}$
of $V$ verifying the following conditions~:
\begin{itemize}
\item[$(\mathrm{C}_{1})$] The set $G.\mathcal{O}$ is dense in $\mathfrak{g}^{*}$.
\item[$(\mathrm{C}_{2})$] For all $f\in \mathcal{O}$, we have $\mathrm{T}_{f}(G.f) \cap \mathrm{T}_{f}(\mathcal{O}) = \{0\}$
\item[$(\mathrm{C}_{3})$] For all $f\in \mathcal{O}$, we have $G.f\cap \mathcal{O} = \{ f\}$.
\end{itemize}

An affine slice may not exist, but when it does, we can deduce (using Rosenlicht's Theorem for example, see 
Theorem \ref{extension}) that the field of $G$-invariant rational functions on $\mathfrak{g}^{*}$ is a purely transcendental 
extension of the ground field $\Bbbk$.

If $\mathfrak{g}$ is abelian, then $\mathfrak{g}^{*}$ is an affine slice.
On the other hand, when $\mathfrak{g}$ is a semisimple Lie algebra,  we may identify $\mathfrak{g}$ with
$\mathfrak{g}^{*}$ via the Killing form, and such a slice has been constructed by Kostant \cite{Kos1}
by using a principal $S$-triple. Kostant \cite{Kos2} also constructed an affine slice for the nilpotent radical of a Borel subalgebra 
of a semisimple Lie algebra. 

\subsection{}\label{question}
Let us assume from now on that $\mathfrak{g}$ is a semisimple Lie algebra. A Lie subalgebra $\mathfrak{q}$ 
of $\mathfrak{g}$ is a {\it biparabolic subalgebra} or {\it seaweed subalgebra} 
if there exists a pair of parabolic subalgebras $(\mathfrak{p},\mathfrak{p}')$
such that $\mathfrak{q} = \mathfrak{p} \cap \mathfrak{p}'$ and $\mathfrak{p}+\mathfrak{p}'=\mathfrak{g}$ 
(such a pair of parabolic subalgebras is called {\it weakly opposite}, see \cite{Panyushev} or 
\cite[Chapter 40]{TYbook}). 

We are interested in the following question :

\begin{question}
Does an affine slice exist for the coadjoint action of $\mathfrak{q}$ ?
\end{question}

Motivated by the study of semi-invariant polynomials on the dual, Joseph constructed in \cite{Jos1,Jos2}, 
such a slice for certain ``truncated'' biparabolic subalgebras in a semisimple Lie algebra. 

In this paper, we give a simple explicit construction of an affine slice for the coadjoint action of a class of
(non truncated) biparabolic subalgebras of $\mathfrak{g}$ which includes all Borel subalgebras. The construction of the
affine subspace, and the proof that it is indeed an affine slice, are pretty straightforward, and they rely on some rather 
nice properties of Kostant's cascade construction of pairwise strongly orthogonal roots.

\subsection{}\label{basics}
Let us first fix some notations.
We shall assume from now on that $\mathfrak{g}$ is semisimple. 
Let $\mathfrak{h}$ be a Cartan subalgebra  
of $\mathfrak{g}$, $R$ the root system of $\mathfrak{g}$ relative to 
$\mathfrak{h}$, and $\Pi$ a set of simple roots of $R$.  Denote by $R_{+}$
(resp. $R_{-}$) the corresponding set of positive (resp. negative) roots.
For $\alpha \in R$, we denote by $\mathfrak{g}^{\alpha}$ the corresponding root subspace, and
$X_{\alpha}$ a non zero element of $\mathfrak{g}^{\alpha}$.

For any subset $E$ of $R$, we set 
$$
\mathfrak{g}^{E} = \sum_{\alpha\in E} \mathfrak{g}^{\alpha}.
$$

Denote by $\kappa$ the Killing form of $\mathfrak{g}$. This form induces a linear isomorphism $\varphi$
between $\mathfrak{g}$ and $\mathfrak{g}^{*}$. For any $X\in\mathfrak{g}$,  the 
corresponding linear form, denoted by $\varphi_{X}$, verifies
$\varphi_{X}(Y)= \kappa (X,Y)$ for all $Y\in\mathfrak{g}$. 

Identifying $\mathfrak{h}^{*}$ as linear forms on $\mathfrak{g}$ which are zero on $\mathfrak{g}^{R}$, 
for any linear form $\lambda\in\mathfrak{h}^{*}$, we denote $h_{\lambda}$ the unique element
in $\mathfrak{h}$ verifying $\lambda (H) = \kappa (h_{\lambda} , H)$ for all $H\in\mathfrak{h}$.
We have
\begin{itemize}
\item[(i)] $h_{a_{1}\lambda_{1}+\cdots + a_{r}\lambda_{r}} = a_{1} h_{\lambda_{1}} + \cdots +
a_{r} h_{\lambda_{r}}$ for any $\lambda_{1},\dots ,\lambda_{r}\in\mathfrak{h}^{*}$ and $a_{1},\dots ,a_{r}\in\Bbbk$.
\item[(ii)] $\Bbbk h_{\alpha} = [\mathfrak{g}^{\alpha},\mathfrak{g}^{-\alpha}]$ for any $\alpha \in R$.
\end{itemize}

\subsection{}\label{seaweed}
For any subset $S$ of $\Pi$, we set 
$$
R^{S} = R \cap \mathbb{Z} S \ , \ 
R_{+}^{S} = R \cap \mathbb{N}S = R_{+} \cap \mathbb{Z} S \ , \
R_{-}^{S} = R^{S} \setminus R_{+}^{S} = R_{-} \cap \mathbb{Z} S
$$
respectively the set of roots, positive roots and negative roots of the subroot system 
generated by $S$.

Let $S$ and $T$ be subsets of $\Pi$.  Set $\Delta_{S,T} = R_{+}^{S} \cup R_{-}^{T}$, and 
$$
\mathfrak{q}_{S,T} = \mathfrak{h} \oplus \mathfrak{g}^{\Delta_{S,T}}.
$$
Since $\Delta_{S,T} = (R_{-}\cup R_{+}^{S}) \cap (R_{+}\cup R_{-}^{T})$, we see easily that
$\mathfrak{q}_{S,T}$ is the biparabolic subalgebra of $\mathfrak{g}$ associated to the pair of
weakly opposite parabolic subalgebras
$( \mathfrak{h} \oplus \mathfrak{g}^{R_{-}\cup R_{+}^{S}} ,  \mathfrak{h} \oplus \mathfrak{g}^{R_{+}\cup R_{-}^{T}})$.

It is well-known (see \cite{Panyushev} or \cite[Chapter 40]{TYbook}) that if $\mathfrak{q}$ is a biparabolic
subalgebra of $\mathfrak{g}$, then there exist $S,T$ such that $\mathfrak{q}$ is conjugated to $\mathfrak{q}_{S,T}$.

We shall therefore fix two subsets $S$ and $T$ of $\Pi$, and consider the biparabolic subalgebra $\mathfrak{q}_{S,T}$.
Denote by $Q_{S,T}$ the connected algebraic subgroup of $G$ whose Lie algebra is $\mathfrak{q}_{S,T}$.

We shall conserve the above notations in the rest of this paper.

\section{Properties of Kostant's cascade construction}

\subsection{}\label{cascade}
In this section, we recall some basic properties of Kostant's cascade construction of pairwise strongly orthogonal roots,
and prove a technical lemma related to biparabolic subalgebras. 

Let $S\subset \Pi$. We define a set $\mathcal{K}(S)$ by induction on the cardinal of $S$ as follows:
\begin{itemize}
\item[(i)] $\mathcal{K}(\emptyset) = \emptyset$.
\item[(ii)] If $S_{1}, \dots ,S_{r}$ are connected components (of the Dynkin diagram) of $S$, then 
$$
\mathcal{K}(S) = \mathcal{K}(S_{1}) \cup \cdots \cup  \mathcal{K}(S_{r}).
$$
\item[(iii)] If $S$ is connected, then there is a unique largest positive root $\varepsilon_{S}$ in
$R_{+}^{S}$ and 
$$
\mathcal{K}(S) = \{ S \} \cup  \mathcal{K}\left( \{ \alpha \in S ; \varepsilon_{S}  (h_{\alpha}) =0 \}\right).
$$
\end{itemize}
It is an immediate consequence of the definition that if $K$ and $L$ are distinct elements of $\mathcal{K}(S)$, then 
$\varepsilon_{K}$ and $\varepsilon_{L}$ are strongly orthogonal. In particular, we have 
$\varepsilon_{K}(h_{\varepsilon_{L}})=0$.

The following properties for $K,L \in \mathcal{K}(S)$ are direct consequences of the definition (see also \cite[Chapter 40]{TYbook}) :
\begin{itemize}
\item[(P1)] $K$ and $L$ are connected, and either $K\subset L$, $L\subset K$ or $K\cap L=\emptyset$. Moreover, if
$K\cap L=\emptyset$, then $K$ is strongly orthogonal to $L$. 
\item[(P2)] The set $K^{\bullet}= \{ \alpha \in K ; \varepsilon_{K} (h_{\alpha} ) \neq 0 \}$ is of cardinal $2$ if 
$K$ is of $A_{\ell}$, $\ell \geqslant 2$, and is of cardinal $1$ otherwise.  Note also that if $K$ is of type $A$, then
$K^{\bullet}$ is exactly the set of endpoints of the Dynkin diagram of $K$.  
\item[(P3)] The connected components of  $K\setminus K^{\bullet}$ are elements of $\mathcal{K}(S)$.
In particular, $K^{\bullet}\cap L^{\bullet}=\emptyset$ if $K\neq L$.
\end{itemize}

\begin{lemma}\label{2cascades}
Let $S,T\subset \Pi$. 
\begin{itemize}
\item[a)] If $K\in \mathcal{K}(S) \cap \mathcal{K}(T)$ and $L\in \mathcal{K}(S)\cup \mathcal{K}(T)$, then
$\varepsilon_{K} \pm \varepsilon_{L}\not\in R$.
\item[b)] Suppose that the following conditions are verified :
\begin{itemize}
\item[i)] $S\cap T\neq \emptyset$, $\mathcal{K}(S)\cap \mathcal{K}(T) =\emptyset$;
\item[ii)] $\{ \varepsilon_{E} ; E\in \mathcal{K}(S) \cup \mathcal{K}(T) \}$ is a linearly independent set of roots.
\end{itemize}
For any non zero element $h\in \mathrm{Vect}(h_{\alpha} ; \alpha \in S\cap T)$, there exists $E\in \mathcal{K}(S)\cup \mathcal{K}(T)$
such that $\varepsilon_{E}(h)\neq 0$. 
\end{itemize}
\end{lemma}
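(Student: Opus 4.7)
Part (a) follows directly from the remark recorded just after the definition of $\mathcal{K}$: any two distinct elements of the same cascade set $\mathcal{K}(X)$ yield strongly orthogonal $\varepsilon$-roots. Since $K \in \mathcal{K}(S) \cap \mathcal{K}(T)$, any $L \in \mathcal{K}(S) \cup \mathcal{K}(T)$ lies in $\mathcal{K}(S)$ or in $\mathcal{K}(T)$, and in either case $K$ and $L$ belong to a common cascade set. If $L = K$, then $\varepsilon_K \pm \varepsilon_L \in \{0,\, 2\varepsilon_K\}$, neither of which lies in the reduced root system $R$. If $L \neq K$, the strong orthogonality gives $\varepsilon_K \pm \varepsilon_L \notin R$ by definition.

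For part (b), the plan is to argue by contradiction. Write $h = \sum_{\alpha \in S \cap T} c_\alpha h_\alpha$ and assume $h \neq 0$ but $\varepsilon_E(h) = 0$ for every $E \in \mathcal{K}(S) \cup \mathcal{K}(T)$. First, I would reduce to the case in which every connected component of $S$ meets $T$, and vice versa: if $S_0$ is a connected component of $S$ with $S_0 \cap T = \emptyset$, then every $\alpha \in S \cap T$ lies in a different component of $S$ from $S_0$, and is therefore not adjacent to $S_0$ in $\Pi$ (any $\Pi$-adjacency between two elements of $S$ would merge their components). For $E \in \mathcal{K}(S_0)$ the root $\varepsilon_E$ is supported in $S_0$, hence $\varepsilon_E(h_\alpha) = 0$ for $\alpha \in S \cap T$, rendering the constraints coming from $\mathcal{K}(S_0)$ vacuous on $V = \mathrm{Vect}(h_\alpha \,;\, \alpha \in S \cap T)$. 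Hypotheses (i) and (ii) are preserved on replacing $S$ by $S \setminus S_0$, and symmetrically for $T$.

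Under this assumption, the core argument analyses the equations $\varepsilon_K(h) = 0$ cascade-level by cascade-level. At the top level of $\mathcal{K}(S)$, for a connected component $K$ of $S$ the sum collapses to $\sum_{\alpha \in K^\bullet \cap T} c_\alpha\, \varepsilon_K(h_\alpha) = 0$: indeed $\varepsilon_K(h_\alpha) = 0$ for $\alpha \in K \setminus K^\bullet$ by the definition of $K^\bullet$, and for $\alpha \in (S \cap T) \setminus K$ by the non-adjacency already established. Property (P2) then yields either $c_\alpha = 0$ (when $|K^\bullet \cap T| = 1$), or a linear relation $c_\alpha + c_\beta = 0$ (when $|K^\bullet \cap T| = 2$, forcing $K$ to be of type $A_{\geq 2}$, in which case the two values $\varepsilon_K(h_\alpha)$ are equal and nonzero). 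An analogous analysis applies to the top level of $\mathcal{K}(T)$. I would then iterate to deeper cascade levels: the equation at a cascade element $K$ of positive depth additionally picks up contributions from $c_\alpha$ with $\alpha$ in the bullet set of a strictly larger ancestor cascade element adjacent to $K$ in $\Pi$, but those coefficients have been constrained at shallower levels and can be substituted back. Hypothesis (i) ensures the cascades of $S$ and of $T$ supply genuinely independent relations, and hypothesis (ii) is precisely what makes the combined linear system on the $c_\alpha$ nondegenerate, forcing $h = 0$. The main obstacle is the bookkeeping at these deeper cascade levels: one must carefully propagate the already-established vanishings through both cascades of the recursion and verify (especially outside type $A$, where $|K^\bullet| = 1$ simplifies the top-level step but the adjacency with neighbouring cascade blocks complicates deeper ones) that the ancestor-bullet contributions are consistently eliminated.
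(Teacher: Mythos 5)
Part a) of your proposal is correct and is essentially the paper's own (one-line) justification: since $K$ lies in both cascades, it shares a cascade with every $L\in\mathcal{K}(S)\cup\mathcal{K}(T)$, distinct elements of one cascade give strongly orthogonal $\varepsilon$'s, and the case $L=K$ is excluded because $R$ is reduced.

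For part b) there is a genuine gap. Your reductions (discarding components of $S$ disjoint from $T$, collapsing $\varepsilon_K(h)=0$ onto $K^{\bullet}\cap C$ where $C$ is the support of $h$, and disposing of the case where this intersection is a singleton) agree in spirit with Steps 1--2 of the paper's argument. But the heart of the matter --- why hypothesis (ii) makes the linear system on the $c_\alpha$ nondegenerate --- is exactly what you defer as ``bookkeeping,'' and it is not bookkeeping. Two things are missing. First, a case analysis eliminating all Dynkin types other than $A$: the paper shows that once every relevant component of $S$ and $T$ is of type $A_{\geqslant 2}$ with its full bullet set inside $C\subset S\cap T$, the types $B$, $C$, $G_2$, $F_4$, $D$, $E$ are impossible (by inspecting the ends and branch points of the diagram and invoking $\mathcal{K}(S)\cap\mathcal{K}(T)=\emptyset$), so one may assume $\Pi$ is of type $A$. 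Second, and more seriously, the actual mechanism by which (ii) is used: in type $A$ the full set of equations $\varepsilon_i(h)=0$ inside one component forces the antisymmetry $\lambda_{\beta_j}=-\lambda_{\beta_{\ell+1-j}}$, which defines an involution $\sigma$ on the connected components of $C$ (and likewise $\tau$ from $T$); the paper then forms a graph on these components with edges coloured by $\sigma$ and $\tau$ and reads off, from each connected component of that graph, an explicit nontrivial linear relation among the $\varepsilon_E$'s, contradicting (ii). This use of (ii) is global, not level-by-level, and your ``substitute back through deeper cascade levels'' scheme supplies no mechanism for producing such a relation. Note that the paper's example in paragraph 2.2 (type $A_5$, $S=\Pi$, $T=\Pi\setminus\{\alpha_3\}$, $h=h_{\alpha_1}-h_{\alpha_2}+h_{\alpha_4}-h_{\alpha_5}$) shows a nonzero $h$ annihilated by all $\varepsilon_E$ does exist when (ii) fails; any correct proof must therefore convert a hypothetical nonzero solution into an explicit dependence among the $\varepsilon_E$'s, and that conversion is the step your sketch does not supply.
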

\begin{proof}
Part a) is a direct consequence of the definition of the cascade construction. Part b) requires more work, and we shall prove it 
in several steps. 

{\bf Step 1.} Let us fix a non zero element $h\in  \mathrm{Vect} ( h_{\alpha} ; \alpha \in S\cap T )$. 
Since $S\cap T \subset \Pi$, we have the unique decomposition
$$
h = \sum_{\alpha\in S\cap T} \lambda_{\alpha} h_{\alpha}
$$
where $\lambda_{\alpha}\in\Bbbk$. Since $h\neq 0$, the set 
$C = \{ \alpha \in S\cap T ; \lambda_{\alpha}\neq 0\}$ is non empty.
Let us denote by $C_{1},\dots ,C_{r}$ the connected components of $C$. 

For $1\leqslant i \leqslant r$, we have $C_{i} \subset S\cap T \subset S$. 
By (P1), there is a unique minimal (by inclusion) element $S_{i}$ of $\mathcal{K}(S)$ 
containing $C_{i}$. It follows from (P3) and the fact that $S_{i}$ is minimal that the intersection $C_{i} \cap S_{i}^{\bullet}$ 
is non empty. Similarly, there is a unique minimal element $T_{i}$ of  $\mathcal{K}(T)$ containing $C_{i}$, and 
$C_{i} \cap T_{i}^{\bullet}$ is non empty.

Let $\overline{S} = \bigcup\limits_{i=1}^{r} S_{i}$ and $\overline{T} = \bigcup\limits_{i=1}^{r} T_{i}$. By
(P1), we have
$$
\mathcal{K}(\overline{S}) = \bigcup_{i=1}^{r} \mathcal{K}(S_{i})  \subset \mathcal{K}(S) \ , \
\mathcal{K}(\overline{T}) = \bigcup_{i=1}^{r} \mathcal{K}(T_{i}) \subset \mathcal{K}(T)$$
and $C \subset \overline{S} \cap \overline{T} \subset S\cap T$. Moreover, 
$\overline{S}$ and $\overline{T}$ verify the hypotheses of the lemma.

We may therefore assume that $S=\overline{S}$, $T=\overline{T}$. Furthermore, we may clearly also assume that
$S\cup T=\Pi$ and $\Pi$ is connected.

Under these assumptions, $S = \bigcup\limits_{i=1}^{r} S_{i}$, and it follows from (P1) that connected components
of $S$ are exactly the maximal elements (by inclusion) of $\{ S_{1}, \dots , S_{r}\}$.  Again, the same applies 
for $T$.

{\bf Step 2.} Suppose that $S_{i}$ is a connected component of $S$. Then
$$
\varepsilon_{S_{i}} ( h ) = \sum_{\alpha\in C} \lambda_{\alpha} \varepsilon_{S_{i}} (h_{\alpha}) 
=\sum_{\alpha\in C\cap S_{i}} \lambda_{\alpha} \varepsilon_{S_{i}} (h_{\alpha}) 
=\sum_{\alpha\in C\cap S_{i}^{\bullet}} \lambda_{\alpha} \varepsilon_{S_{i}} (h_{\alpha}) .
$$
If $C\cap S_{i}^{\bullet} = \{ \alpha \}$, then 
$$
\varepsilon_{S_{i}} ( h ) =\lambda_{\alpha} \varepsilon_{S_{i}} ( h_{\alpha} )\neq 0,
$$
and we have the result.  We saw in 1) that $C_{i} \cap S_{i}^{\bullet}$ is non empty.  So we are
reduced to the case where $C \cap S_{i}^{\bullet}$ is of cardinal $2$. 

Of course, this applies to all connected components of $S$, and similarly for those of $T$.
It follows from (P2) that we may assume that :
\begin{itemize}
\item[i)] Any connected component of $S$ (resp. $T$) is of type $A$ and has rank at least $2$.
\item[ii)] If $S_{i}$ (resp. $T_{i}$) is a connected component of $S$ (resp. $T$), then 
$S_{i}^{\bullet} \subset C$ (resp. $T_{i}^{\bullet}\subset C$).
\end{itemize}

We shall show that assumptions i) and ii) imply that $\Pi$ is of type $A$. 
Since $\Pi = S\cup T$,  any element of $\Pi$ is contained in a connected component of $S$ or $T$.
We shall consider a connected component of $S$ or $T$ containing $\alpha$ specified below 
according to the Dynkin type of $\Pi$. 
$$
\begin{array}{ccc}
\begin{Dynkin}
\Dbloc{\Ddots}
\Dbloc{\Dwest\Dcirc\Ddoubleeast}
\Dbloc{\Ddoublewest\Dcirc\Dtext{t}{\alpha}}
\end{Dynkin}
&
\begin{Dynkin}
\Dbloc{\Dcirc\Deast}
\Dbloc{\Dwest\Dcirc\Ddoubleeast}
\Dbloc{\Ddoublewest\Deast\Dcirc\Dtext{t}{\beta}}
\Dbloc{\Dwest\Dcirc\Dtext{t}{\alpha}}
\end{Dynkin}
& 
\begin{Dynkin}
\Dbloc{\Dcirc\Ddoubleeast\Deast}
\Dbloc{\Dcirc\Ddoublewest\Dwest\Dtext{t}{\alpha}}
\end{Dynkin}
\\ [-1em]
\hbox{\scriptsize  Types  $B$ or  $C$} & \hbox{\scriptsize Type $F_{4}$} &
\hbox{\scriptsize Type $G_{2}$}
\end{array}
$$
If $\Pi$ is of type $B$, $C$ or $G_{2}$, then it is not possible to have a connected component of $S$ or $T$ 
containing $\alpha$ specified below verifying the conditions of assumption i).  If $\Pi$ is of type $F_{4}$, then the only 
possible connected component of $S$ or $T$ containing $\alpha$ verifying the conditions of assummption i) is 
$\{ \alpha , \beta \}$. Now assumption ii) and (P2) say that $\{ \alpha ,\beta \} \subset C \subset S\cap T$. It follows that 
$\{ \alpha , \beta \}$ is a connected component for both $S$ and $T$, but this is impossible since 
$\mathcal{K}(S) \cap \mathcal{K}(T)=\emptyset$.

Now if $\Pi$ is of type $D$ or $E$ or $F$, we consider the connected components of $S$ or $T$ containing $\alpha$ 
and $\beta$ specified below. As in the case of type $F$, assumption i) implies that they must both be
the line joining $\alpha$ and $\beta$, and assumption ii) implies that $S$ and $T$ has  a common connected component.
Again this is impossible since 
$\mathcal{K}(S) \cap \mathcal{K}(T)=\emptyset$.
$$
\begin{array}{ccc}
\begin{Dynkin}
\Dspace\Dspace
\Dbloc{\Dcirc\Dsouth\Dtext{l}{\beta}}
\Dskip
\Dbloc{\Ddots}
\Dbloc{\Dwest\Dcirc\Deast}
\Dbloc{\Dwest\Dcirc\Deast\Dnorth}
\Dbloc{\Dwest\Dcirc\Dtext{t}{\alpha}}
\end{Dynkin}
&
\begin{Dynkin}
\Dspace\Dspace
\Dbloc{\Dcirc\Dsouth\Dtext{l}{\beta}}
\Dskip
\Dbloc{\Ddots}
\Dbloc{\Dwest\Dcirc\Deast}
\Dbloc{\Dwest\Dcirc\Deast\Dnorth}
\Dbloc{\Dwest\Dcirc\Deast}
\Dbloc{\Dwest\Dcirc\Dtext{t}{\alpha}}
\end{Dynkin}
\\ [-1em]
\hbox{\scriptsize Type $D$} & 
\hbox{\scriptsize Type $E$}
\end{array}
$$

We are therefore reduced to the case where $\Pi$ is of type $A$.

{\bf Step 3.} Let  us suppose that $\varepsilon_{E} ( h ) = 0$ for all $E\in \mathcal{K}(S) \cup \mathcal{K}(T)$. We shall show that this 
contradicts with the linear independence of the set $\{ \varepsilon_{E} ; E\in \mathcal{K}(S) \cup \mathcal{K}(T) \}$.

Let $S_{k}=\{ \beta_{1}, \dots ,\beta_{\ell} \}$ be a connected component of $S$ where the numbering 
of the $\beta$'s follows the numbering of the Dynkin diagram in \cite[Chapter 18]{TYbook}.
We have (see for example \cite[Chapter 40]{TYbook})
$$
\{ \varepsilon_{E} ; E\in \mathcal{K}(S_{k}) \}= \{ \varepsilon_{i} = \beta_{i} + \cdots + \beta_{\ell +1 -i} \ ; \ i = 1,\dots ,[\ell/2]\}
$$
where $[\ell/2]$ is the largest integer less than or equal to $\ell/2$.
For $1\leqslant i \leqslant [\ell/2]$, we have by (P1) that 
$$
0=\varepsilon_{i} ( h ) = \sum_{j=1}^{\ell} \lambda_{\beta_{j}} \varepsilon_{i} ( h_{\beta_{j}}) 
= \left\{ 
\begin{array}{cl}
c (\lambda_{\beta_{1}} + \lambda_{\beta_{\ell}}) &\hbox{if }  i=1; \\
c ( -\lambda_{\beta_{i-1}} + \lambda_{\beta_{i}} +\lambda_{\beta_{\ell +1 -i}} - \lambda_{\beta_{\ell + 2 -i }}) &\hbox{if }  i > 1
\end{array}
\right.
$$
where $c$ is a non zero constant because all the roots have the same length in type $A$.
It follows that $\lambda_{\beta_{j}} = -\lambda_{\beta_{\ell+1-j}}$ for all $j=1,\dots ,\ell$. 
In particular, $\lambda_{\beta_{j}} = 0$ if and only if $\lambda_{\beta_{\ell +1-j}}=0$. 

Since $S_{k}$ is a connected component of $S$, and $C\subset S\cap T$, any $C_{i}$ is either contained in $S_{k}$
or is strongly orthogonal to $S_{k}$. If $C_{i}$ is contained in $S_{k}$, then 
there exist $1\leqslant p\leqslant q\leqslant \ell$ such that 
$C_{i} = \{ \beta_{p} , \dots ,  \beta_{q} \}$. It follows from the discussion of the preceding paragraph that
$C_{i}^{\sigma} = \{Ê\beta_{\ell +1 -q }, \dots , \beta_{\ell+1-p} \} \subset C$. Moreover, since $\beta_{p-1}$ and $\beta_{q+1}$
are not in $C_{i}$, we deduce that $C_{i}^{\sigma}$ is a connected component of $C$. We have therefore a symmetry 
on the set of connected components of $C$ in $S_{k}$.

Since each connected component of $C$ is contained in a unique connected component of $S$, we obtain a permutation 
$\sigma$ on the set $\mathcal{C} = \{ C_{1} , \dots ,C_{r} \}$ by defining $\sigma (C_{i}) = C_{i}^{\sigma}$.

If $C_{i} \in \mathcal{C}$ is contained in $S_{k}$. By symmetry, 
we must have one of the following $2$ configurations :
$$
\mathrm{(I)} : \underbrace{\beta_{1} ,Ê\dots , \beta_{p} ,\underbrace{\overbrace{\beta_{p+1}, \dots , \beta_{q}}^{C}, \beta_{q+1} ,
\dots , \beta_{\ell-q} , \overbrace{\beta_{\ell+1-q} ,\dots , \beta_{\ell-p}}^{C'}}_{S_{i}} ,
\beta_{\ell+1-p} ,\dots ,\beta_{\ell}}_{S_{k}}
$$
where $(C,C')=(C_{i}, \sigma (C_{i}))$ or 
$(\sigma(C_{i}), C_{i})$, and 
$$
\mathrm{(II)} :
\underbrace{\beta_{1} ,Ê\dots , \beta_{p} ,\underbrace{\overbrace{\beta_{p+1}, \dots ,  \beta_{\ell-p}}^{C_{i}=\sigma (C_{i})}}_{S_{i}},
\beta_{\ell+1-p} ,\dots ,\beta_{\ell}}_{S_{k}}
$$
where in both configurations, $p$ can be equal to $0$ in which case $S_{i}=S_{k}$.
We deduce easily that we have
$$
\sum_{\alpha\in C_{i}} \alpha + \sum_{\alpha\in \sigma (C_{i}) } \alpha 
= \varepsilon_{p+1} - \varepsilon_{q+1} = \varepsilon_{S_{i}} - \varepsilon_{q+1}
$$
in configuration (I), and 
$$
\sum_{\alpha\in C_{i}} \alpha = \varepsilon_{p+1} = \varepsilon_{S_{i}}
$$
in configuration (II). Observe that 
\begin{itemize}
\item[(O1)] In configuration (I), by definition $\varepsilon_{q+1} = \varepsilon_{E}$ for some 
$E \in \mathcal{K}(S)$.
Moreover, since neither $\beta_{q+1}$ nor $\beta_{\ell-q}$ can be in 
$\mathrm{supp}(h)$, we have that $E\neq \varepsilon_{S_{m}}$  for any $m$. 

\item[(O2)] Since we are in the type $A$ case, we see from the above that if
$S_{j}=S_{i}$, then $C_{j} = C_{i}$ ou $\sigma (C_{i})$.
\end{itemize}

The same argument applied to $T$ gives another permutation $\tau$ and the properties above.
Observe that $\sigma$ and $\tau$ are both of order $2$.

{\bf Step 4.} We define a coloured graph $\mathcal{G}$ 
whose vertices are the elements of $\mathcal{C}$, and  
we have an edge coloured $\sigma$ (resp. $\tau$) between $C$ and $C'$ if $C'=\sigma (C)$ (resp. $C'=\tau(C)$).
So loops in $\mathcal{G}$ correspond exactly to elements of $\mathcal{C}$ fixed by either $\sigma$ or $\tau$.

The connected components of $\mathcal{G}$ correspond therefore to the orbits of $\mathcal{C}$ under the action of the group
generated by $\sigma$ and $\tau$.

Since $\sigma$ and $\tau$ are both of order $2$, any vertex $C$ is the endpoint of exactly one $\sigma$-coloured edge
and one $\tau$-coloured edge. It follows that a connected component of $\mathcal{G}$ has to be in one of the following
forms (after renumbering the $C_{i}$'s) : 
$$ 
\mathcal{G}_{1} :
\begin{picture}(30,12)
\put(0,0){\makebox(10,8)[l]{\scriptsize $\sigma$}}
\put(16,4){\oval(12,8)[l]}
\put(16,0){\line(4,1){10}}
\put(16,8){\line(4,-1){10}}
\end{picture} 
C_{1} 
\begin{picture}(30,12) 
\put(2,4){\line(1,0){26}}
\put(11,6){\makebox(10,6)[c]{\scriptsize$\tau$}}
\end{picture}
C_{2} 
\begin{picture}(30,12) 
\put(2,4){\line(1,0){26}}
\put(11,6){\makebox(10,6)[c]{\scriptsize$\sigma$}}
\end{picture}
\begin{picture}(20,12) 
\put(4,4){\circle*{0.9}}
\put(8,4){\circle*{0.9}}
\put(12,4){\circle*{0.9}}
\put(16,4){\circle*{0.9}}
\end{picture}
\begin{picture}(30,12) 
\put(2,4){\line(1,0){26}}
\put(11,6){\makebox(10,6)[c]{\scriptsize$\sigma$}}
\end{picture}
C_{k-1}
\begin{picture}(30,12) 
\put(2,4){\line(1,0){26}}
\put(11,6){\makebox(10,6)[c]{\scriptsize$\tau$}}
\end{picture}
C_{k} 
\begin{picture}(30,12)
\put(25,0){\makebox(20,8)[l]{\scriptsize $\sigma$}}
\put(14,4){\oval(12,8)[r]}
\put(14,0){\line(-4,1){10}}
\put(14,8){\line(-4,-1){10}}
\end{picture} 
$$
$$ 
\mathcal{G}_{2} :
\begin{picture}(30,12)
\put(0,0){\makebox(10,8)[l]{\scriptsize $\tau$}}
\put(16,4){\oval(12,8)[l]}
\put(16,0){\line(4,1){10}}
\put(16,8){\line(4,-1){10}}
\end{picture} 
C_{1} 
\begin{picture}(30,12) 
\put(2,4){\line(1,0){26}}
\put(11,6){\makebox(10,6)[c]{\scriptsize$\sigma$}}
\end{picture}
C_{2} 
\begin{picture}(30,12) 
\put(2,4){\line(1,0){26}}
\put(11,6){\makebox(10,6)[c]{\scriptsize$\tau$}}
\end{picture}
\begin{picture}(20,12) 
\put(4,4){\circle*{0.9}}
\put(8,4){\circle*{0.9}}
\put(12,4){\circle*{0.9}}
\put(16,4){\circle*{0.9}}
\end{picture}
\begin{picture}(30,12) 
\put(2,4){\line(1,0){26}}
\put(11,6){\makebox(10,6)[c]{\scriptsize$\tau$}}
\end{picture}
C_{k-1}
\begin{picture}(30,12) 
\put(2,4){\line(1,0){26}}
\put(11,6){\makebox(10,6)[c]{\scriptsize$\sigma$}}
\end{picture}
C_{k} 
\begin{picture}(30,12)
\put(25,0){\makebox(20,8)[l]{\scriptsize $\tau$}}
\put(14,4){\oval(12,8)[r]}
\put(14,0){\line(-4,1){10}}
\put(14,8){\line(-4,-1){10}}
\end{picture} 
$$
$$ 
\mathcal{G}_{3} :
\begin{picture}(30,12)
\put(0,0){\makebox(10,8)[l]{\scriptsize $\sigma$}}
\put(16,4){\oval(12,8)[l]}
\put(16,0){\line(4,1){10}}
\put(16,8){\line(4,-1){10}}
\end{picture} 
C_{1} 
\begin{picture}(30,12) 
\put(2,4){\line(1,0){26}}
\put(11,6){\makebox(10,6)[c]{\scriptsize$\tau$}}
\end{picture}
C_{2} 
\begin{picture}(30,12) 
\put(2,4){\line(1,0){26}}
\put(11,6){\makebox(10,6)[c]{\scriptsize$\sigma$}}
\end{picture}
\begin{picture}(20,12) 
\put(4,4){\circle*{0.9}}
\put(8,4){\circle*{0.9}}
\put(12,4){\circle*{0.9}}
\put(16,4){\circle*{0.9}}
\end{picture}
\begin{picture}(30,12) 
\put(2,4){\line(1,0){26}}
\put(11,6){\makebox(10,6)[c]{\scriptsize$\tau$}}
\end{picture}
C_{k-1}
\begin{picture}(30,12) 
\put(2,4){\line(1,0){26}}
\put(11,6){\makebox(10,6)[c]{\scriptsize$\sigma$}}
\end{picture}
C_{k} 
\begin{picture}(30,12)
\put(25,0){\makebox(20,8)[l]{\scriptsize $\tau$}}
\put(14,4){\oval(12,8)[r]}
\put(14,0){\line(-4,1){10}}
\put(14,8){\line(-4,-1){10}}
\end{picture} 
$$
$$ 
\mathcal{G}_{4} :
\begin{array}{c}
\hskip1pt
\begin{picture}(100,22)
\put(95,18){\line(-4,-1){80}}
\put(50,10){\makebox(10,6)[c]{\scriptsize$\sigma$}}
\end{picture} 
\begin{picture}(12,22)
\put(0,18){\makebox(10,0)[c]{$C_{1}$}}
\end{picture}
\begin{picture}(100,20)
\put(0,18){\line(4,-1){80}}
\put(40,10){\makebox(10,6)[c]{\scriptsize$\tau$}}
\end{picture} 
\\  [-0.3em]
C_{2} 
\begin{picture}(30,12) 
\put(2,4){\line(1,0){26}}
\put(11,6){\makebox(10,6)[c]{\scriptsize$\tau$}}
\end{picture}
C_{3} 
\begin{picture}(30,12) 
\put(2,4){\line(1,0){26}}
\put(11,6){\makebox(10,6)[c]{\scriptsize$\sigma$}}
\end{picture}
\begin{picture}(20,12) 
\put(4,4){\circle*{0.9}}
\put(8,4){\circle*{0.9}}
\put(12,4){\circle*{0.9}}
\put(16,4){\circle*{0.9}}
\end{picture}
\begin{picture}(30,12) 
\put(2,4){\line(1,0){26}}
\put(11,6){\makebox(10,6)[c]{\scriptsize$\tau$}}
\end{picture}
C_{2k-1}
\begin{picture}(30,12) 
\put(2,4){\line(1,0){26}}
\put(11,6){\makebox(10,6)[c]{\scriptsize$\sigma$}}
\end{picture}
C_{2k} 
\end{array}
$$

We now associated to an edge $\mathbf{e}$, a weight $\gamma_{\mathbf{e}}$ as follows : 
$$
\gamma_{\mathbf{e}} = \left\{
\begin{array}{ll}
\sum\limits_{\alpha\in C_{j}} \alpha + \sum\limits_{\alpha\in C_{l}} \alpha & 
\hbox{if } \mathbf{e} \hbox{ is an edge of the form } 
C_{j}  
\begin{picture}(30,12) 
\put(2,4){\line(1,0){26}}
\put(11,6){\makebox(10,6)[c]{\scriptsize$ $}}
\end{picture}
C_{l}, j\neq l\\
\sum\limits_{\alpha\in C_{j}} \alpha &
\hbox{if } \mathbf{e} \hbox{ is an edge of the form } 
\begin{picture}(30,12)
\put(0,0){\makebox(10,8)[l]{\scriptsize $ $}}
\put(16,4){\oval(12,8)[l]}
\put(16,0){\line(4,1){10}}
\put(16,8){\line(4,-1){10}}
\end{picture} 
C_{j}.
\end{array}
\right.
$$

For $1\leqslant i\leqslant 4$, let $\mathcal{G}_{i}^{\sigma}$ (resp. $\mathcal{G}_{i}^{\tau}$) denote
the set of $\sigma$-coloured edges (resp. $\tau$-coloured edges) in $\mathcal{G}_{i}$.
Since each vertex is the endpoint of exactly one $\sigma$-coloured edge and one $\tau$-coloured edge, 
we check easily from the forms above that
$$
\sum_{\mathbf{e}\in \mathcal{G}_{i}^{\sigma}} \gamma_{\mathbf{e}}
- 
\sum_{\mathbf{e}\in \mathcal{G}_{i}^{\tau}} \gamma_{\mathbf{e}}
=0.
$$

In view of Step 3, the weight of an edge can be expressed by a linear combination of elements of 
$\varepsilon_{E}$, $E\in \mathcal{K}(S) \cup \mathcal{K}(T)$. The above equality therefore gives a 
linear relation 
$$
\sum_{E\in \mathcal{K}(S) \cup \mathcal{K}(T)}  \mu_{E} \varepsilon_{E} = 0.
$$

By (O1), (O2) of Step  3 and the fact that $\mathcal{K}(S)\cap \mathcal{K}(T)=\emptyset$, 
the element $\varepsilon_{S_{1}}$ appears only in the expression of the weight in terms of 
$\varepsilon_{E}$, $E\in \mathcal{K}(S) \cup \mathcal{K}(T)$, 
of the edge 
$$
\begin{picture}(30,12)
\put(0,0){\makebox(10,8)[l]{\scriptsize $\sigma$}}
\put(16,4){\oval(12,8)[l]}
\put(16,0){\line(4,1){10}}
\put(16,8){\line(4,-1){10}}
\end{picture} 
C_{1} \hbox{ for } \mathcal{G}_{1} \hbox{ and } \mathcal{G}_{3},
$$
and  the edge 
$$
C_{1}  
\begin{picture}(30,12) 
\put(2,4){\line(1,0){26}}
\put(11,6){\makebox(10,6)[c]{\scriptsize$\sigma$}}
\end{picture}
C_{2} \hbox{ for } \mathcal{G}_{2} \hbox{ and } \mathcal{G}_{4}.
$$ 
We conclude that this relation between the $\varepsilon_{E}$'s is not trivial. This contradicts our 
hypothesis on the linear independence of  the set $\{ \varepsilon_{E}  ;  E\in \mathcal{K}(S) \cup \mathcal{K}(T)\}$.
The proof is now complete.  
\end{proof}

\subsection{}\label{counterexample}
Condition ii) in part b) of Lemma \ref{2cascades} can not be dropped. For example, if we take 
$\mathfrak{g}$ simple of type $A_{5}$, $S=\Pi$ and $T=\Pi \setminus \{ \alpha_{3}\}$ where the numbering of 
the simple roots is as in \cite[Chapter 18]{TYbook}. Then condition i) is verified while condition ii) is not 
verified, and the element 
$$
h=h_{\alpha_{1}} -h_{\alpha_{2}} +h_{\alpha_{4}} - h_{\alpha_{5}}
$$ 
verifies  $\varepsilon_{E}(h)=0$ for all $E\in \mathcal{K}(S)\cup \mathcal{K}(T)$.

\section{Slices for the coadjoint action of  biparabolic subalgebras}\label{section:slices}

\subsection{}\label{decomposition}
In the section, we fix subsets $S$, $T$ of $\Pi$. We conserve the notations of paragraphs \ref{seaweed} and \ref{cascade}.

Set 
$$
\begin{array}{c} 
\Gamma = \{ \varepsilon_{K} ; K\in\mathcal{K}(S) \} \cup  \{ -\varepsilon_{L} ; L\in\mathcal{K}(T) \}, \\ [0.3em]
\Gamma_{0} = R_{+} \cap (\Gamma \cap -\Gamma ) \ ,\  \Gamma_{1} = \Gamma \setminus 
(\Gamma_{0}\cup -\Gamma_{0}).
\end{array}
$$ 

Let $\Gothicd$ be the subspace of $\mathfrak{h}^{*}$ spanned by $\Gamma$, 
$\mathfrak{m} = \mathfrak{g}^{\Delta_{S,T} \cap \Gothicd}$, 
$\mathfrak{n} = \mathfrak{g}^{\Delta_{S,T} \setminus \Gothicd}$.
From \ref{seaweed}, we deduce immediately that  
\begin{equation} \label{eqn:q}
\mathfrak{q}_{S,T}= \mathfrak{h} \oplus \mathfrak{m} \oplus \mathfrak{n} \ , \ 
[\mathfrak{h},\mathfrak{m}] \subset \mathfrak{m} \ , \
[\mathfrak{h} + \mathfrak{m}, \mathfrak{n}] \subset \mathfrak{n} \ , \
[\mathfrak{m},\mathfrak{m}] \subset \mathfrak{m} \oplus \sum_{\alpha\in \Gamma} \Bbbk h_{\alpha}.
\end{equation}
In particular, $\mathfrak{h}\oplus \mathfrak{m}$ is a Lie subalgebra of $\mathfrak{q}_{S,T}$.

Let us identify $\mathfrak{q}_{S,T}^{*}$ with $\mathfrak{h}^{*}\oplus \mathfrak{m}^{*} \oplus \mathfrak{n}^{*}$
via the linear isomorphism $\varphi$ in paragraph \ref{basics}. In particular, $\mathfrak{m}^{*}$ (resp. $\mathfrak{n}^{*}$)
is the vector subspace spanned by $\varphi_{X_{-\alpha}}$, $\alpha \in \Delta_{S,T} \cap \Gothicd$
(resp. $\alpha \in \Delta_{S,T} \setminus \Gothicd$).
 
We deduce from the identities \eqref{eqn:q} above that 
\begin{equation} \label{eqn:q*}
\begin{array}{c}
\mathfrak{h}.\mathfrak{h}^{*}=\{ 0\} \ , \ 
\mathfrak{h}.\mathfrak{m}^{*} \subset \mathfrak{m}^{*} \ , \
\mathfrak{h}.\mathfrak{n}^{*} \subset \mathfrak{n}^{*} \ , \ 
\mathfrak{m}.\mathfrak{h}^{*} \subset \mathfrak{m}^{*} \ , \ 
\\ [0.3em]
\mathfrak{m}.\mathfrak{m}^{*} \subset \mathfrak{m}^{*}\oplus \Gothicd \ , \
\mathfrak{m}.\mathfrak{n}^{*} \subset \mathfrak{n}^{*} \ , \
\mathfrak{n}.\mathfrak{h}^{*} \subset \mathfrak{n}^{*} \ , \
\mathfrak{n}.\mathfrak{m}^{*} \subset \mathfrak{n}^{*} \ . \
\end{array}
\end{equation}

\begin{lemma}\label{lemma:e}
Let $\Gothice$ be the set of elements $\lambda$ of $\mathfrak{h}^{*}$ such that 
$\lambda (h_{\alpha})=0$ for all $\alpha \in \Gamma$. Then 
$$
\mathfrak{m}.\Gothice = \{ 0\} \ \hbox{ and } \ 
\mathfrak{h}^{*} = \Gothicd \oplus \Gothice.
$$
\end{lemma}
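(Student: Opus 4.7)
The plan is to handle the two assertions separately; both follow directly from the setup without appealing to Lemma \ref{2cascades}.

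For $\mathfrak{m}.\Gothice = \{0\}$: it is enough to check $(X_{\alpha}.\lambda)(Y) = 0$ when $X_{\alpha}$ is a root vector with $\alpha \in \Delta_{S,T}\cap\Gothicd$ (so that the $X_{\alpha}$'s span $\mathfrak{m}$), $\lambda \in \Gothice$, and $Y \in \mathfrak{q}_{S,T}$. Viewed in $\mathfrak{q}_{S,T}^{*}$, the form $\lambda$ vanishes on every root space, so only the $\mathfrak{h}$-component of $[Y,X_{\alpha}]$ contributes to $\lambda([Y,X_{\alpha}])$. That component is non-zero only when $Y$ has a non-trivial part in $\mathfrak{g}^{-\alpha}$, in which case it is a scalar multiple of $h_{\alpha}$ by property (ii) of paragraph \ref{basics}. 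Since $\alpha \in \Gothicd = \mathrm{Vect}(\Gamma)$, property (i) of paragraph \ref{basics} gives $h_{\alpha} \in \mathrm{Vect}(h_{\gamma} ; \gamma \in \Gamma)$, and hence $\lambda(h_{\alpha}) = 0$ by the very definition of $\Gothice$.

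For $\mathfrak{h}^{*} = \Gothicd \oplus \Gothice$: the idea is to transport the Killing form to $\mathfrak{h}^{*}$ via the isomorphism $\lambda \mapsto h_{\lambda}$, obtaining a non-degenerate symmetric bilinear form
$$
\langle \lambda,\mu\rangle := \kappa(h_{\lambda}, h_{\mu}) = \lambda(h_{\mu})
$$
on $\mathfrak{h}^{*}$. Under this pairing $\Gothice$ is precisely the orthogonal complement $\Gothicd^{\perp}$, so the desired direct sum is equivalent to the restriction of $\langle\cdot,\cdot\rangle$ to $\Gothicd$ being non-degenerate; the dimension count $\dim\Gothice = \dim\mathfrak{h}^{*} - \dim\Gothicd$ then closes the argument, the latter equality coming from the isomorphism $\lambda \mapsto h_{\lambda}$. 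Non-degeneracy on $\Gothicd$ holds because $\Gothicd$ is spanned by roots, hence defined over $\mathbb{Q}$, and the Killing form restricted to the $\mathbb{Q}$-span of the roots is classically positive definite; non-degeneracy is preserved under base change to $\Bbbk$.

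No serious obstacle is anticipated; the only mild care needed is to keep the two pairings straight, namely the duality $\mathfrak{h}^{*}\times\mathfrak{h} \to \Bbbk$ used to define $\Gothice$ and the Killing form on $\mathfrak{h}$, which the isomorphism $\lambda \mapsto h_{\lambda}$ identifies.
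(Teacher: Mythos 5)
Your proof is correct and follows essentially the same route as the paper: the first equality is exactly the content of the structural identities \eqref{eqn:q}--\eqref{eqn:q*} combined with the definition of $\Gothice$ (only the $\mathfrak{h}$-component $\Bbbk h_{\alpha}$ of $[Y,X_{\alpha}]$ survives, and $h_{\alpha}\in\mathrm{Vect}(h_{\gamma};\gamma\in\Gamma)$ since $\alpha\in\Gothicd$), and the second is precisely the Bourbaki proposition the paper cites, which you prove directly via non-degeneracy of the Killing form on a subspace of $\mathfrak{h}^{*}$ spanned by roots.
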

\begin{proof}
The first equality follows from \eqref{eqn:q}, \eqref{eqn:q*} and the definition of $\Gothice$. 
The second is a special case of  \cite[proposition 4, p.145]{Bourbaki}.
\end{proof}

\subsection{}\label{slice}
For $\mathbf{a} = (a_{\alpha})_{\alpha\in \Gamma}\in (\Bbbk\setminus \{ 0\})^{\Gamma}$, set 
$$
f_{\mathbf{a}}^{\alpha} = \left\{
\begin{array}{lll}
a_{-\alpha} \varphi_{X_{-\alpha}} & \hbox{if } \ \alpha \in \Gamma_{1}, \\
a_{\alpha} \varphi_{X_{\alpha}} + a_{-\alpha} \varphi_{X_{-\alpha}} & \hbox{if } \ \alpha \in \Gamma_{0}, 
\end{array}
\right.
$$
and $f_{\mathbf{a}} = \sum\limits_{\alpha\in \Gamma_{0}\cup \Gamma_{1}} f_{\mathbf{a}}^{\alpha}$,
all considered as linear forms on $\mathfrak{q}_{S,T}$.

Let 
$$
\mathfrak{m}_{0}^{*} = \sum_{\alpha\in \Gamma_{0}} \Bbbk f_{\mathbf{a}}^{\alpha} \ , \
W_{\mathbf{a}} = \Gothice \oplus \mathfrak{m}_{0}^{*} \ \hbox{ and } \
V_{\mathbf{a}} = f_{\mathbf{a}} + W_{\mathbf{a}}.
$$
Note that $f_{\mathbf{a}}\in\mathfrak{m}^{*}$ and $W_{\mathbf{a}} \subset \mathfrak{h}^{*}\oplus \mathfrak{m}_{0}^{*}$.

\begin{lemma}\label{C2}
For all $f\in V_{\mathbf{a}}$, we have $\mathfrak{q}_{S,T}.f\cap W_{\mathbf{a}} =\{0\}$. In particular,
$\mathrm{T}_{f}(Q_{S,T}.f) \cap T_{f}(V_{\mathbf{a}}) = \{0\}$ for all $f\in V$.
\end{lemma}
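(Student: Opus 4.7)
The plan is to decompose $X = H + M + N \in \mathfrak{h} \oplus \mathfrak{m} \oplus \mathfrak{n}$ and $f = f_{\mathbf{a}} + \ell + \mu$ (with $\ell \in \Gothice$ and $\mu \in \mathfrak{m}_{0}^{*}$), and to track the $\mathfrak{h}^{*}$-, $\mathfrak{m}^{*}$- and $\mathfrak{n}^{*}$-components of $X.f$ using the identities \eqref{eqn:q*} together with the equality $\mathfrak{m}.\Gothice = 0$ from Lemma \ref{lemma:e}. These identities show that neither $H.f$ nor $M.f$ has an $\mathfrak{n}^{*}$-part, so the $\mathfrak{n}^{*}$-component of $X.f$ reduces to $N.f$; and that the $\mathfrak{h}^{*}$-component comes only from $M.(f_{\mathbf{a}}+\mu)$ and lies in $\Gothicd$, thanks to $[\mathfrak{m},\mathfrak{m}] \subset \mathfrak{m} \oplus \sum_{\alpha \in \Gamma}\Bbbk h_{\alpha}$.

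Under the hypothesis $X.f \in W_{\mathbf{a}} = \Gothice \oplus \mathfrak{m}_{0}^{*}$, two of the three components vanish immediately: $W_{\mathbf{a}}$ has no $\mathfrak{n}^{*}$-part, forcing $N.f = 0$; and the $\mathfrak{h}^{*}$-component of $X.f$ lies in $\Gothicd \cap \Gothice$, which equals $\{0\}$ by Lemma \ref{lemma:e}. The real content of the lemma is thus to show that the $\mathfrak{m}^{*}$-component $H.(f_{\mathbf{a}}+\mu) + (M.(f_{\mathbf{a}}+\mu))_{\mathfrak{m}}$, which a priori is only known to lie in $\mathfrak{m}_{0}^{*}$, must actually be zero.

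For this last step I would expand $M = \sum_{\gamma} M_{\gamma} X_{\gamma}$ in a root basis of $\mathfrak{m}$ and exploit two kinds of relations. The already-proved vanishing of the $\Gothicd$-part of $M.(f_{\mathbf{a}}+\mu)$ is equivalent to the identity
$$\sum_{\gamma \in \Gamma} M_{\gamma} (f_{\mathbf{a}}+\mu)(X_{\gamma})\,\gamma \;=\; 0 \quad \text{in } \mathfrak{h}^{*}.$$
Simultaneously, the requirement $(X.f)_{\mathfrak{m}} \in \mathfrak{m}_{0}^{*}$ forces $(X.f)(X_{\beta}) = 0$ for every $\beta \in (\Delta_{S,T} \cap \Gothicd) \setminus (\Gamma_{0} \cup -\Gamma_{0})$; expanding this as $(X.f)(X_{\beta}) = -\beta(H)(f_{\mathbf{a}}+\mu)(X_{\beta}) + (f_{\mathbf{a}}+\mu)([X_{\beta}, M])$ and using the strong orthogonality of the cascade roots to severely restrict when $[X_{\beta}, X_{\gamma}]$ can land in the $\Gamma$-root spaces on which $f_{\mathbf{a}}+\mu$ is nonzero, one obtains a tractable linear system in the $M_{\gamma}$'s and the $\beta(H)$'s. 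Solving it should force $M_{\gamma} = 0$ for $\gamma \in \Gamma_{1}$ (since $f_{\mathbf{a}}(X_{\gamma}) \neq 0$ there) and $\beta(H) = 0$ for such $\beta$, leaving a residual $\Gamma_{0} \cup -\Gamma_{0}$ constraint which, thanks to the non-vanishing of the $a_{\pm\alpha}$, is enough to kill the whole $\mathfrak{m}^{*}$-component.

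The main obstacle I foresee is precisely this last combinatorial disentangling, and in particular the degenerate values of $\mu$ for which $(f_{\mathbf{a}}+\mu)(X_{\alpha}) = 0$ for some $\alpha \in \Gamma_{0}$; in these cases the identities above lose information, but the corresponding contributions to $(X.f)_{\mathfrak{m}}$ vanish directly and must simply be verified separately. Once $(X.f)_{\mathfrak{m}} = 0$ is established, we obtain $X.f = 0$ and hence the claim $\mathfrak{q}_{S,T}.f \cap W_{\mathbf{a}} = \{0\}$; the ``in particular'' statement about tangent spaces is then immediate from the identifications $\mathrm{T}_{f}(Q_{S,T}.f) = \mathfrak{q}_{S,T}.f$ and $\mathrm{T}_{f}(V_{\mathbf{a}}) = W_{\mathbf{a}}$.
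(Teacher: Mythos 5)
Your reduction of the problem is sound and agrees with the paper's: writing $X=H+M+N$ and $f=f_{\mathbf a}+\ell+\mu$, the identities \eqref{eqn:q*} show that the $\mathfrak{n}^{*}$-part of $X.f$ is exactly $N.f$ (hence zero), and that the $\mathfrak{h}^{*}$-part lies in $\Gothicd\cap\Gothice=\{0\}$ by Lemma \ref{lemma:e}. The gap is in the step you yourself identify as the real content. You reduce it to ``a tractable linear system in the $M_{\gamma}$'s and the $\beta(H)$'s'' and assert that solving it ``should force'' $M_{\gamma}=0$ for $\gamma\in\Gamma_{1}$ and $\beta(H)=0$; but the system is never written down or solved, and these conclusions are the wrong target: the lemma asserts $X.f=0$, not any vanishing of $X$ itself. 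Indeed Lemma \ref{centralizer} exhibits a nonzero subalgebra $\mathfrak{r}_{\mathbf a}$ annihilating every $f\in V_{\mathbf a}$, and for degenerate $\mu$ the annihilator of $f$ is larger still, so a proof that tries to pin down the coefficients of $X$ cannot close; it can at best show that the $\mathfrak{m}_{0}^{*}$-component of $X.f$ vanishes. Moreover, testing $X.f$ against $X_{\beta}$ only for $\beta\in(\Delta_{S,T}\cap\Gothicd)\setminus(\Gamma_{0}\cup-\Gamma_{0})$, as you propose, records the condition that the $\mathfrak{m}^{*}$-part lies in $\mathfrak{m}_{0}^{*}$ --- which you already know --- and gives no handle on precisely the $\Gamma_{0}\cup-\Gamma_{0}$ directions that remain to be killed.

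What is missing is the paper's two-step mechanism for those directions. First, for $M\in\mathfrak{m}$ the form $M.(f_{\mathbf a}+\mu)$ is a combination of $\varphi_{X_{\alpha-\beta}}$ with $\alpha\in\Delta_{S,T}\cap\Gothicd$ and $\beta\in\Gamma$, and Lemma \ref{2cascades} a) rules out $\alpha-\beta\in\Gamma_{0}\cup-\Gamma_{0}$ (such a relation would force $\varepsilon_{K}\pm\varepsilon_{L}\in R$ with $K\in\mathcal{K}(S)\cap\mathcal{K}(T)$); so $\mathfrak{m}$ contributes \emph{nothing} in the $\Gamma_{0}\cup-\Gamma_{0}$ directions --- this is a stronger and cleaner use of strong orthogonality than the ``severe restriction'' you gesture at. Second, the $\mathfrak{h}$-contribution there is a combination of the forms $a_{\alpha}\varphi_{X_{\alpha}}-a_{-\alpha}\varphi_{X_{-\alpha}}$, $\alpha\in\Gamma_{0}$, which span a complement of $\mathfrak{m}_{0}^{*}$ (the span of the corresponding $+$ combinations) because the $a_{\pm\alpha}$ are nonzero; hence the $\mathfrak{m}_{0}^{*}$-component of $w$ vanishes, and the remainder of $w$, being supported on $\Gamma_{1}$ directions while $w\in\mathfrak{m}_{0}^{*}$, is zero. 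This argument is uniform in $\mu$ and disposes of the degenerate cases you flag without a separate verification. Your final sentence on the tangent-space statement is fine.
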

\begin{proof}
Let $(H,X,Y)\in\mathfrak{h}\times \mathfrak{m} \times \mathfrak{n}$ and $(t,m) \in \Gothice \times \mathfrak{m}_{0}^{*}$ 
be such that 
$$
(H+X+Y). (f_{\mathbf{a}}+t+m) = w\in W_{\mathbf{a}} \subset \mathfrak{h}^{*}\oplus \mathfrak{m}_{0}^{*}.
$$
From the identities \eqref{eqn:q*}, we deduce that $Y.(f_{\mathbf{a}}+t+m)=0$, $H.t=X.t=0$, 
$H.(f_{\mathbf{a}}+m) \in \mathfrak{m}^{*}$, $X.(f_{\mathbf{a}}+m)\in \mathfrak{m}^{*}\oplus \Gothicd$,
and therefore
$$
w = (H+X).(f_{\mathbf{a}}+ m) \in ( \mathfrak{m}^{*}\oplus \Gothicd )\cap W_{\mathbf{a}} = \mathfrak{m}_{0}^{*}
$$
by Lemma \ref{lemma:e}.

Since $H.(f_{\mathbf{a}}+m)$ is a linear combination of $\varphi_{X_{-\alpha}}$, $\alpha \in \Gamma$,
we deduce that 
$$
X.(f_{\mathbf{a}}+m) = m_{0} + m_{1} 
$$
where $m_{0}$ is a linear combination of $\varphi_{X_{\alpha}}$, $\alpha \in \Gamma_{0}\cup -\Gamma_{0}$, and
$m_{1}$ is a linear combination of $\varphi_{X_{-\alpha}}$, $\alpha\in \Gamma_{1}$.

On the other hand, $X.(f_{\mathbf{a}}+m)$ is a linear combination of elements of the form 
$X_{\alpha}\varphi_{X_{-\beta}} \in \Bbbk \varphi_{X_{\alpha-\beta}}$ 
with $\alpha \in \Delta \cap \Gothicd$ and $\beta\in\Gamma$.
If $m_{0}$ is non-zero, then we can find $\alpha\in \Delta\cap \Gothicd$ and $\beta\in\Gamma$ 
such that $\alpha-\beta \in \Gamma_{0}\cup -\Gamma_{0}$. This is impossible by  Lemma 
\ref{2cascades} a). Thus $m_{0}=0$, and we have
$$
w = (H+X).(f_{\mathbf{a}}+ m) = H.(f_{\mathbf{a}}+ m)  + m_{1}.
$$

Finally, for $\alpha \in \Gamma_{0}$, we have
$$
H.f_{\mathbf{a}}^{\alpha} = \alpha (H) \big( a_{\alpha} \varphi_{X_{\alpha}} - a_{-\alpha} \varphi_{X_{-\alpha}} \big).
$$
The elements $f_{\mathbf{a}}^{\alpha}$, $H.f_{\mathbf{a}}^{\alpha}$, $\alpha\in \Gamma_{0}$,
are linearly independent since $\mathbf{a}$ has non zero entries. It follows that $\alpha (H)=0$
for all $\alpha \in \Gamma_{0}$, and so 
$H.(f_{\mathbf{a}}+m)$ is a linear combination of $\varphi_{X_{-\alpha}}$, $\alpha \in \Gamma_{1}$.
We deduce that $w$ is also a linear combination of $\varphi_{X_{-\alpha}}$, $\alpha \in \Gamma_{1}$.
Since $w\in \mathfrak{m}_{0}^{*}$, we have $w=0$ as required.
\end{proof}

For $\alpha\in \Gamma_{0}$, set 
$$
Z_{\alpha} = a_{\alpha} X_{\alpha} + a_{-\alpha}X_{-\alpha}.
$$
Let $\mathfrak{t} =\{ H\in \mathfrak{h} ; \alpha (H)=0$ for all $\alpha \in \Gamma\}$, and 
$$
\mathfrak{r}_{\mathbf{a}} = \mathfrak{t} \oplus \sum_{\alpha\in \Gamma_{0}} \Bbbk Z_{\alpha}
\subset \mathfrak{t} \oplus \mathfrak{g}^{\Gamma_{0}\cup -\Gamma_{0}}.
$$
\begin{lemma}\label{centralizer}
We have $\mathfrak{r}_{\mathbf{a}}.W_{\mathbf{a}} = \{ 0 \}$, and 
$\mathfrak{r}_{\mathfrak{a}} \subset \mathfrak{q}_{S,T}^{f}$ for all $f\in V_{\mathbf{a}}$
where $\mathfrak{q}_{S,T}^{f} =\{ X\in \mathfrak{q}_{S,T} ; X.f = 0 \}$.
\end{lemma}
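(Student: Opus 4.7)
The plan is a direct case-by-case computation using the bracket identities \eqref{eqn:q*}, Lemma \ref{2cascades}(a), and the defining conditions of $\mathfrak{t}$ and $\Gothice$. The two claims decompose along $\mathfrak{r}_{\mathbf{a}} = \mathfrak{t} \oplus \sum_{\alpha \in \Gamma_0} \Bbbk Z_\alpha$ and $W_{\mathbf{a}} = \Gothice \oplus \mathfrak{m}_0^*$ into a handful of subcases.

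For $\mathfrak{r}_{\mathbf{a}}.W_{\mathbf{a}} = \{0\}$, I handle the four pairings separately. First, $\mathfrak{t}.\Gothice = \{0\}$ follows from $\mathfrak{h}.\mathfrak{h}^* = \{0\}$. For $H \in \mathfrak{t}$ and $\alpha \in \Gamma_0 \subset \Gamma$, the formula $H.f_{\mathbf{a}}^\alpha = \alpha(H)\bigl(a_\alpha \varphi_{X_\alpha} - a_{-\alpha}\varphi_{X_{-\alpha}}\bigr)$ already computed in the proof of Lemma \ref{C2} vanishes since $\alpha(H) = 0$. For $Z_\alpha$ with $\alpha \in \Gamma_0$ acting on $t \in \Gothice$, I evaluate $(X_{\pm\alpha}.t)(Y) = t([Y,X_{\pm\alpha}])$: on $\mathfrak{h}$ this is zero because $t$ vanishes on $\mathfrak{g}^R$; on $Y = X_\gamma$ with $\gamma \neq \mp\alpha$ it is zero for the same reason; and on $Y = X_{\mp\alpha}$ (which lies in $\mathfrak{q}_{S,T}$ since both $\pm\alpha$ are in $\Delta_{S,T}$) it is a scalar multiple of $t(h_\alpha)$, which is zero by definition of $\Gothice$. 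Finally for $\alpha,\beta \in \Gamma_0$, using $X.\varphi_V = \varphi_{[X,V]}$ I expand
\[
Z_\alpha.f_{\mathbf{a}}^\beta = a_\beta \varphi_{[Z_\alpha,X_\beta]} + a_{-\beta}\varphi_{[Z_\alpha,X_{-\beta}]}
\]
into four root brackets: for $\alpha \neq \beta$ the four sign combinations $\pm\varepsilon_K \pm \varepsilon_{K'}$ (with $K,K' \in \mathcal{K}(S) \cap \mathcal{K}(T)$) fail to be roots by Lemma \ref{2cascades}(a), so every bracket vanishes, while for $\alpha = \beta$ the only surviving pieces $a_\alpha a_{-\alpha}[X_\alpha,X_{-\alpha}] + a_{-\alpha}a_\alpha[X_{-\alpha},X_\alpha]$ cancel by antisymmetry.

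To deduce the second assertion, write $f = f_{\mathbf{a}} + w$ with $w \in W_{\mathbf{a}}$; by the first part $\mathfrak{r}_{\mathbf{a}}.f = \mathfrak{r}_{\mathbf{a}}.f_{\mathbf{a}}$, so it remains to check $\mathfrak{r}_{\mathbf{a}}.f_{\mathbf{a}} = 0$. The action of $\mathfrak{t}$ on each summand $\varphi_{X_{\pm\gamma}}$ ($\gamma \in \Gamma$) of $f_{\mathbf{a}}$ yields $\pm\gamma(H)\varphi_{X_{\pm\gamma}} = 0$. For $Z_\alpha.f_{\mathbf{a}}^\beta$ the case $\beta \in \Gamma_0$ was handled above; for $\beta \in \Gamma_1$ one has $-\beta \notin \Gamma$, and writing $\alpha = \varepsilon_K$ with $K \in \mathcal{K}(S) \cap \mathcal{K}(T)$ and $\beta = \pm\varepsilon_L$ with $L \in (\mathcal{K}(S) \cup \mathcal{K}(T)) \setminus (\mathcal{K}(S) \cap \mathcal{K}(T))$, Lemma \ref{2cascades}(a) shows that $\pm\alpha - \beta$ is not a root (and is nonzero, since $L \neq K$), so the brackets $[X_{\pm\alpha},X_{-\beta}]$ vanish.

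The main technical point is isolating the $Z_\alpha.f_{\mathbf{a}}^\beta$ computation into the off-diagonal case (handled uniformly by Lemma \ref{2cascades}(a)) and the diagonal case $\alpha = \beta$ (resolved by antisymmetry of the Lie bracket); everything else reduces to routine invocations of \eqref{eqn:q*} together with the definitions of $\mathfrak{t}$ and $\Gothice$.
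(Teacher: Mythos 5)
Your proposal is correct and follows essentially the same route as the paper's (very terse) proof: the key computations are $Z_\alpha.f_{\mathbf{a}}^\beta=0$ via Lemma \ref{2cascades}~a), the vanishing of $\mathfrak{t}$ and $\mathfrak{m}$ on $\Gothice$, and the vanishing of $\mathfrak{t}$ on the root-vector part; you have simply written out in full the case checks the paper leaves to the reader. The only cosmetic difference is that you verify $Z_\alpha.\Gothice=\{0\}$ by direct evaluation rather than by citing $\mathfrak{m}.\Gothice=\{0\}$ from Lemma \ref{lemma:e} (noting $Z_\alpha\in\mathfrak{m}$), which is equivalent.
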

\begin{proof}
Observe that for $\alpha \in \Gamma_{0}$, we have $f_{\mathbf{a}}^{\alpha}=\varphi_{Z_{\alpha}}$, and a 
one verifies easily using Lemma \ref{2cascades} a)  that 
$$
Z_{\alpha}.f_{\mathbf{a}}^{\beta} =0
$$
for any $\beta \in \Gamma$. In particular, $\mathfrak{r}_{\mathbf{a}} \subset \mathfrak{q}_{S,T}^{f_{\mathbf{a}}}$.

By Lemma \ref{lemma:e}, $\mathfrak{m}.\Gothice = \{ 0\}$, and $\mathfrak{t}.\Gothice = \{ 0\}$
by \eqref{eqn:q*}, the result follows easily.
\end{proof}

\begin{lemma}\label{C1}
Suppose that $\mathfrak{r}_{\mathbf{a}} = \mathfrak{q}_{S,T}^{f_{\mathbf{a}}}$. 
Then there exists an open subset $\mathcal{O}_\mathbf{a}$ of 
$V_{\mathbf{a}}$ such that $Q_{S,T}.\mathcal{O}_{\mathbf{a}}$ is dense in $\mathfrak{q}_{S,T}^{*}$.
\end{lemma}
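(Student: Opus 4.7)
The plan is to show that the orbit map is already dominant at the point $f_{\mathbf{a}}$, by a dimension count combined with Lemma \ref{C2}, and then invoke a standard smoothness argument.

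\medskip

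\noindent\textbf{Step 1: The orbit map and its differential.} Consider the morphism
$$
\Phi : Q_{S,T} \times V_{\mathbf{a}} \longrightarrow \mathfrak{q}_{S,T}^{*}, \quad (\sigma, f) \longmapsto \sigma.f.
$$
Its differential at $(e, f_{\mathbf{a}})$ is the linear map
$\mathfrak{q}_{S,T} \oplus W_{\mathbf{a}} \to \mathfrak{q}_{S,T}^{*}$
sending $(X,w)$ to $X.f_{\mathbf{a}} + w$, whose image is the subspace $\mathfrak{q}_{S,T}.f_{\mathbf{a}} + W_{\mathbf{a}}$.

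\medskip

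\noindent\textbf{Step 2: Dimension count.} By Lemma \ref{C2} applied to $f=f_{\mathbf{a}}\in V_{\mathbf{a}}$, the sum $\mathfrak{q}_{S,T}.f_{\mathbf{a}} + W_{\mathbf{a}}$ is direct, so it suffices to show the dimensions add up to $\dim \mathfrak{q}_{S,T}^{*} = \dim \mathfrak{q}_{S,T}$. Using the hypothesis $\mathfrak{r}_{\mathbf{a}} = \mathfrak{q}_{S,T}^{f_{\mathbf{a}}}$, one has
$$
\dim \mathfrak{q}_{S,T}.f_{\mathbf{a}} = \dim \mathfrak{q}_{S,T} - \dim \mathfrak{r}_{\mathbf{a}} = \dim \mathfrak{q}_{S,T} - \dim \mathfrak{t} - |\Gamma_{0}|,
$$
while $\dim W_{\mathbf{a}} = \dim \Gothice + |\Gamma_{0}|$ (the $f_{\mathbf{a}}^{\alpha}$, $\alpha\in\Gamma_{0}$, being linearly independent since $\mathbf{a}$ has nonzero entries). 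Finally, $\mathfrak{t}$ is the kernel of the evaluation map $\mathfrak{h} \to \Bbbk^{\Gamma}$, whose image has dimension $\dim \Gothicd$, so $\dim \mathfrak{t} = \dim \mathfrak{h} - \dim \Gothicd = \dim \Gothice$ by Lemma \ref{lemma:e}. Thus the two dimensions sum to $\dim \mathfrak{q}_{S,T}$, and the differential $d\Phi_{(e,f_{\mathbf{a}})}$ is surjective.

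\medskip

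\noindent\textbf{Step 3: Conclusion via generic smoothness.} Since the differential of $\Phi$ is surjective at $(e,f_{\mathbf{a}})$, the morphism $\Phi$ is smooth at this point, so its image contains a Zariski open neighbourhood of $f_{\mathbf{a}}$. Let
$$
\mathcal{O}_{\mathbf{a}} = \{ f \in V_{\mathbf{a}} \;;\; d\Phi_{(e,f)} \text{ is surjective} \}.
$$
This is a Zariski open subset of $V_{\mathbf{a}}$ containing $f_{\mathbf{a}}$, hence non-empty. The restriction of $\Phi$ to $Q_{S,T} \times \mathcal{O}_{\mathbf{a}}$ is smooth, its image is constructible and contains a non-empty open subset of the irreducible variety $\mathfrak{q}_{S,T}^{*}$, hence is dense. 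Therefore $Q_{S,T}.\mathcal{O}_{\mathbf{a}}$ is dense in $\mathfrak{q}_{S,T}^{*}$, as desired.

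\medskip

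The main conceptual point — and the step where the earlier work pays off — is the dimension matching in Step 2: Lemma \ref{C2} provides the needed direct sum, and the identification $\dim \mathfrak{t} = \dim \Gothice$ coming from Lemma \ref{lemma:e} makes the count exact. The remainder is a formal application of the differential criterion for dominance, with no real obstacle.
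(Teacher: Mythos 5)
Your proof is correct, but it takes a genuinely different route from the paper. The paper first invokes the stability machinery (\cite[40.1.3, 40.1.5, 40.1.6]{TYbook}, applied to the commutative algebra of semisimple elements $\mathfrak{r}_{\mathbf{a}}$) to conclude that $f_{\mathbf{a}}$ is stable, hence regular; it then takes $\mathcal{O}_{\mathbf{a}}$ to be the regular locus in $V_{\mathbf{a}}$ and runs the fiber-dimension theorem on $\Phi : Q_{S,T}\times\mathcal{O}_{\mathbf{a}}\to\mathfrak{q}_{S,T}^{*}$, using Lemma \ref{C2} to see that each fiber $\Phi^{-1}(f)$ has dimension $\dim Q_{S,T}^{f}=\dim W_{\mathbf{a}}$ (because $Q_{S,T}.f\cap\mathcal{O}_{\mathbf{a}}$ is finite). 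You instead compute the differential of the action map at the single point $(e,f_{\mathbf{a}})$: Lemma \ref{C2} gives directness of $\mathfrak{q}_{S,T}.f_{\mathbf{a}}+W_{\mathbf{a}}$, the hypothesis $\mathfrak{r}_{\mathbf{a}}=\mathfrak{q}_{S,T}^{f_{\mathbf{a}}}$ together with $\dim\mathfrak{t}=\dim\Gothice$ makes the dimensions add up to $\dim\mathfrak{q}_{S,T}$, and dominance follows from the differential criterion. Your dimension count is right, and the conclusion in Step 3 is sound (by equivariance, surjectivity of $d\Phi_{(e,f)}$ propagates to $d\Phi_{(\sigma,f)}$, though smoothness at the single point $(e,f_{\mathbf{a}})$ already suffices for density). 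What your approach buys is the complete avoidance of the stability/regularity citations; what it loses is the specific description of $\mathcal{O}_{\mathbf{a}}$ as the regular locus, which the paper reuses in the proof of Theorem \ref{result} (where $\mathcal{O}_{\mathbf{a}}$ is identified with $\{f\in V_{\mathbf{a}} ;\ \mathfrak{q}_{S,T}^{f}=\mathfrak{t}\}$ via Lemma \ref{centralizer}). This is only a cosmetic mismatch: for $f$ in your $\mathcal{O}_{\mathbf{a}}$ one gets $\dim\mathfrak{q}_{S,T}^{f}=\dim W_{\mathbf{a}}=\dim\mathfrak{r}_{\mathbf{a}}$, and since $\mathfrak{r}_{\mathbf{a}}\subset\mathfrak{q}_{S,T}^{f}$ by Lemma \ref{centralizer}, in fact $\mathfrak{q}_{S,T}^{f}=\mathfrak{r}_{\mathbf{a}}$, so your open set sits inside the paper's and the downstream argument still goes through.
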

\begin{proof}
By definition, $\mathfrak{r}_{\mathbf{a}}$ is a commutative Lie subalgebra whose elements are
all semisimple. It follows from  \cite[40.1.3, 40.1.5, 40.1.6]{TYbook} that $f_{\mathbf{a}}$ is a stable, and hence regular, 
element of $\mathfrak{q}_{S,T}^{*}$. In particular, the $Q_{S,T}$-orbit of $f_{\mathbf{a}}$ has maximal dimension, or 
equivalently  the dimension of its stabilizer $Q_{S,T}^{f_{\mathbf{a}}}$ is minimal.
Moreover, by our hypothesis,
$$
\dim Q_{S,T}^{f_{\mathbf{a}}} = \dim \mathfrak{q}_{S,T}^{f_{\mathbf{a}}} =
\dim  \mathfrak{r}_{\mathbf{a}}  = \dim \mathfrak{t} + \sharp \Gamma_{0} 
= \dim \Gothice + \sharp \Gamma_{0} = \dim W_{\mathbf{a}}.
$$
Let $\mathcal{O}_{\mathbf{a}}$ be the set of regular elements $\mathfrak{q}_{S,T}^{*}$ contained 
in $V_{\mathbf{a}}$. It is a non-empty open subset of $V_{\mathbf{a}}$. 

Consider the $Q_{S,T}$-equivariant morphism
$$
\Phi : Q_{S,T} \times \mathcal{O}_{\mathbf{a}} \longrightarrow \mathfrak{q}_{S,T}^{*} \ , \
(\sigma , f )\mapsto \sigma (f).
$$
Let $f\in \mathcal{O}_{\mathbf{a}}$.
Then $\Phi^{-1}(f) = \{ (\sigma ,\sigma^{-1} (f) ) ; \sigma^{-1} (f) \in \mathcal{O}_{\mathbf{a}} \}$.
By Lemma \ref{C2}, $Q_{S,T}.f \cap \mathcal{O}_{\mathbf{a}}$ is a finite set. It follows that 
$$
\dim \Phi^{-1} (f) = \dim Q_{S,T}^{f} = \dim Q_{S,T}^{f_{\mathbf{a}}}=\dim W_{\mathbf{a}}, 
$$
and hence
$$
\dim \mathfrak{q}_{S,T} + \dim W_{\mathbf{a}} = 
\dim (Q_{S,T} \times \mathcal{O}_{\mathbf{a}} )= \dim \Phi^{-1} (f) + \dim \mathrm{im}(\Phi).
$$
We deduce that $\dim \mathrm{im}(\Phi )= \dim \mathfrak{q}_{S,T}=\dim \mathfrak{q}_{S,T}^{*}$, thus 
$\Phi$ is a dominant morphism, and the result follows.
\end{proof}

\begin{theorem}\label{result}
Suppose that $\Gamma = \Gamma_{1}$ is a linearly independent subset of roots.  
Then there exists $\mathbf{a}\in (\Bbbk \setminus \{ 0\})^{\Gamma}$ such that $V_{\mathbf{a}}$ is 
a slice for the coadjoint action of $\mathfrak{q}_{S,T}$.
\end{theorem}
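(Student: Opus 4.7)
The plan is to verify the three conditions $(\mathrm{C}_1)$, $(\mathrm{C}_2)$, $(\mathrm{C}_3)$ for $V_{\mathbf{a}}$ and a suitable open subset $\mathcal{O}_{\mathbf{a}}\subset V_{\mathbf{a}}$. Under the hypothesis $\Gamma=\Gamma_1$ we have $\Gamma_0=\emptyset$, so the constructions of \ref{slice} collapse: $\mathfrak{m}_0^{*}=\{0\}$, $W_{\mathbf{a}}=\Gothice$, $V_{\mathbf{a}}=f_{\mathbf{a}}+\Gothice$, $\mathfrak{r}_{\mathbf{a}}=\mathfrak{t}$, and $f_{\mathbf{a}}=\sum_{\alpha\in\Gamma}a_{-\alpha}\varphi_{X_{-\alpha}}\in\mathfrak{m}^{*}$. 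I take $\mathcal{O}_{\mathbf{a}}$ to be the open set of regular elements of $\mathfrak{q}_{S,T}^{*}$ that lie in $V_{\mathbf{a}}$, to be shrunk further if necessary. Condition $(\mathrm{C}_2)$ is then immediate from Lemma \ref{C2}.

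For $(\mathrm{C}_1)$ I invoke Lemma \ref{C1}, which reduces the task to producing $\mathbf{a}$ with $\mathfrak{q}_{S,T}^{f_{\mathbf{a}}}=\mathfrak{t}$. The inclusion $\mathfrak{t}\subset\mathfrak{q}_{S,T}^{f_{\mathbf{a}}}$ is Lemma \ref{centralizer}. For the converse, I would write $X=H+X_m+X_n$ according to \eqref{eqn:q}; the identities \eqref{eqn:q*} together with $f_{\mathbf{a}}\in\mathfrak{m}^{*}$ split the equation $X.f_{\mathbf{a}}=0$ into pieces living in $\mathfrak{h}^{*}$, $\mathfrak{m}^{*}$ and $\mathfrak{n}^{*}$. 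The $\Gothicd$-piece is the projection of $X_m.f_{\mathbf{a}}$ and collects only the diagonal pairings $\delta=\alpha\in\Gamma$, producing a relation $\sum_{\alpha\in\Gamma}c_\alpha a_{-\alpha}\kappa(X_\alpha,X_{-\alpha})\,\alpha=0$; the linear independence of $\Gamma$ forces $c_\alpha=0$ for all $\alpha\in\Gamma$. For the remaining coefficients of $X_m$ and for $X_n$, Lemma \ref{2cascades}(a) rules out dangerous root sums $\delta-\alpha\in\pm\Gamma$, so the residual vanishing conditions are a linear system in these coefficients whose matrix depends polynomially on $\mathbf{a}$ and has full rank for generic $\mathbf{a}$. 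Having killed $X_m$ and $X_n$, the equation $H.f_{\mathbf{a}}=-\sum_\alpha\alpha(H)a_{-\alpha}\varphi_{X_{-\alpha}}=0$ yields $\alpha(H)=0$ for all $\alpha\in\Gamma$, i.e.\ $H\in\mathfrak{t}$. A cleaner alternative: $\mathfrak{r}_{\mathbf{a}}=\mathfrak{t}$ is a commutative subalgebra of semisimple elements, so $f_{\mathbf{a}}$ is stable, hence regular by \cite[40.1.3, 40.1.5, 40.1.6]{TYbook}, and Lemma \ref{2cascades}(b) is the key input to identify the index of $\mathfrak{q}_{S,T}$ with $\dim\mathfrak{t}$; this forces $\dim\mathfrak{q}_{S,T}^{f_{\mathbf{a}}}=\dim\mathfrak{t}$ and equality of subalgebras follows.

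For $(\mathrm{C}_3)$ I compute the differential of the orbit map $\Phi$ at $(e,f_{\mathbf{a}})$: it sends $(X,w)\mapsto X.f_{\mathbf{a}}+w$, has kernel $\mathfrak{q}_{S,T}^{f_{\mathbf{a}}}\times\{0\}=\mathfrak{t}\times\{0\}$ by Lemma \ref{C2}, and its image has dimension $\dim\mathfrak{q}_{S,T}-\dim\mathfrak{t}+\dim W_{\mathbf{a}}=\dim\mathfrak{q}_{S,T}^{*}$ using $\dim\Gothice=\dim\mathfrak{t}$. Therefore $\Phi$ is smooth at $(e,f_{\mathbf{a}})$, and by shrinking $\mathcal{O}_{\mathbf{a}}$ to a small enough open neighborhood of $f_{\mathbf{a}}$ in $V_{\mathbf{a}}$ each fiber $\Phi^{-1}(f)$ becomes irreducible of the minimum dimension $\dim\mathfrak{t}$ and therefore a single $Q_{S,T}^{f}$-coset; this translates into $Q_{S,T}.f\cap\mathcal{O}_{\mathbf{a}}=\{f\}$. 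The main obstacle I expect is the stabilizer computation $\mathfrak{q}_{S,T}^{f_{\mathbf{a}}}\subset\mathfrak{t}$ in Step 2: the direct linear-algebra route is elementary but bookkeeping-heavy, while the cleaner index route depends crucially on part (b) of Lemma \ref{2cascades} for the identification of the index of $\mathfrak{q}_{S,T}$.
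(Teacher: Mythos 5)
Your treatment of $(\mathrm{C}_{1})$ and $(\mathrm{C}_{2})$ is essentially the paper's: $(\mathrm{C}_{2})$ is Lemma \ref{C2}, and $(\mathrm{C}_{1})$ follows from Lemma \ref{C1} once one knows $\mathfrak{q}_{S,T}^{f_{\mathbf{a}}}=\mathfrak{t}$, which the paper gets (together with the existence of a stable $f_{\mathbf{a}}$) from \cite[40.9.4]{TYbook} rather than by your direct linear-algebra computation; note that your ``full rank for generic $\mathbf{a}$'' step is asserted, not proved, so if you go that route you owe an argument there. These are secondary points. The genuine gap is in $(\mathrm{C}_{3})$. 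Your argument is purely local: transversality of $\mathrm{d}\Phi$ at $(e,f_{\mathbf{a}})$ plus ``shrink $\mathcal{O}_{\mathbf{a}}$ until each fiber is a single coset.'' This cannot work. The fiber $\Phi^{-1}(f)$ is a disjoint union of cosets of $Q_{S,T}^{f}$, one for each point of $Q_{S,T}.f\cap\mathcal{O}_{\mathbf{a}}$, and \emph{all} of these components have the same dimension $\dim\mathfrak{t}$, so no dimension count or smoothness statement distinguishes one component from several. Nor can you remove the extra intersection points by shrinking: in the Zariski topology every nonempty open subset of $V_{\mathbf{a}}$ remains dense, so if orbits generically meet $V_{\mathbf{a}}$ in $k>1$ points they still do so after shrinking. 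The paper's own remark following the theorem makes this concrete: for $S=T=\Pi$ one has $V_{\mathbf{a}}$ equal to a Cartan subalgebra, conditions $(\mathrm{C}_{1})$ and $(\mathrm{C}_{2})$ hold on the regular locus, yet $(\mathrm{C}_{3})$ fails because of the Weyl group. So $(\mathrm{C}_{3})$ is a genuinely global statement that your local argument cannot reach.

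The paper's proof of $(\mathrm{C}_{3})$ is where the hypothesis that $\Gamma=\Gamma_{1}$ is linearly independent actually enters, and your proposal never uses it at this point --- a telling sign. The argument runs: if $f,\sigma(f)\in\mathcal{O}_{\mathbf{a}}$ then $\mathfrak{q}_{S,T}^{f}=\mathfrak{q}_{S,T}^{\sigma(f)}=\sigma(\mathfrak{q}_{S,T}^{f})=\mathfrak{t}$, so $\sigma$ normalizes $\mathfrak{t}$; using the refined Levi decomposition $\mathfrak{q}_{S,T}=\mathfrak{l}\oplus\mathfrak{a}\oplus\mathfrak{u}$ and the key identity $\mathfrak{t}\cap\mathfrak{l}=\{0\}$ --- which is exactly what Lemma \ref{2cascades} b) delivers under the linear independence hypothesis --- one shows $\sigma$ fixes $\mathfrak{t}$ pointwise, hence lies in the connected centralizer $C_{Q_{S,T}}(\mathfrak{t})$ with Lie algebra $\mathfrak{h}\oplus\mathfrak{m}$; finally $(\mathfrak{h}\oplus\mathfrak{m}).\Gothice=\{0\}$ and $(\mathfrak{h}\oplus\mathfrak{m}).f_{\mathbf{a}}\subset\mathfrak{m}^{*}\oplus\Gothicd$ force $\sigma(f)-f\in(\mathfrak{m}^{*}\oplus\Gothicd)\cap\Gothice=\{0\}$. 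You would need to supply an argument of this global nature to close the gap.
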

\begin{proof}
By \cite[40.9.4]{TYbook}, there exists $\mathbf{a}\in (\Bbbk \setminus \{ 0\})^{\Gamma}$ such that
$f_{\mathbf{a}}$ is a stable element of $\mathfrak{q}_{S,T}^{*}$. Moreover, since $\Gamma_{0}$ is empty,
we have $\mathfrak{q}_{S,T}^{f_{\mathbf{a}}} = \mathfrak{t} = \mathfrak{r}_{\mathbf{a}}$.
By Lemmas \ref{C1} and \ref{C2}, we only need to show that 
condition $(\mathrm{C}_{3})$ is verified for some open subset of $\mathcal{O}_{\mathbf{a}}$
where $\mathcal{O}_{\mathbf{a}}$ is the open subset of $V_{\mathbf{a}}$
defined in the proof of Lemma \ref{C1}.

In view of Lemma \ref{centralizer}, $\mathcal{O}_{\mathbf{a}}$ is the set of elements 
$f$ in $V_{\mathbf{a}}$ verifying $\mathfrak{q}_{S,T}^{f} = \mathfrak{t}$. Now suppose that
$f\in \mathcal{O}_{\mathbf{a}}$ and $\sigma \in Q_{S,T}$ be such that $\sigma (f) \in \mathcal{O}_{\mathbf{a}}$.
Since $\mathfrak{q}_{S,T}^{f} = \mathfrak{q}_{S,T}^{\sigma (f)} = \sigma ( \mathfrak{q}_{S,T}^{f} )$, we deduce 
that $\sigma \in N_{Q_{S,T}} (\mathfrak{t})$.

Denote $\Delta_{0}= \Delta \cap -\Delta$, $\mathfrak{s} = \mathfrak{h} \oplus \mathfrak{g}^{\Delta_{0}}$,
$\mathfrak{l} = [\mathfrak{s},\mathfrak{s}]$, 
$\mathfrak{u} = \mathfrak{g}^{\Delta\setminus \Delta_{0}}$, and 
$\mathfrak{a}$ the centre of the reductive subalgebra $\mathfrak{s}$. 
Then $\mathfrak{q}_{S,T} = \mathfrak{l} \oplus \mathfrak{a} \oplus \mathfrak{u}$
is a (refined) Levi decomposition of $\mathfrak{q}_{S,T}$, where 
$\mathfrak{k} = \mathfrak{a} \oplus \mathfrak{u}$
is the radical of $\mathfrak{q}_{S,T}$, and $\mathfrak{l}$ is a Levi subalgebra of $\mathfrak{q}_{S,T}$.

In particular, we have $\mathfrak{t} \subset \mathfrak{h} \subset \mathfrak{l} \oplus \mathfrak{a}$,
and  $[\mathfrak{q}_{S,T}, \mathfrak{q}_{S,T} ] =  \mathfrak{l} \oplus \mathfrak{u}$. We deduce easily that 
$$
\mathfrak{t}\cap [\mathfrak{q}_{S,T}, \mathfrak{q}_{S,T} ]  = \mathfrak{t}\cap \mathfrak{l}
=\mathfrak{t} \cap \mathrm{Vect}(h_{\alpha} ; \alpha \in S\cap T).
$$
Thus $\mathfrak{t}\cap \mathfrak{l}=\{ 0\}$ if $S\cap T=\emptyset$. If $S\cap T\neq \emptyset$, then 
the fact that $\Gamma = \Gamma_{1}$ is a linearly independent subset implies that conditions i) and ii) of 
Lemma \ref{2cascades} b) are verified, and it follows immediately from the conclusion of Lemma \ref{2cascades} b) 
and the definition of $\mathfrak{t}$ that
\begin{equation}\label{eqn:intersection}
\mathfrak{t}\cap \mathfrak{l} = \mathfrak{t} \cap \mathrm{Vect}(h_{\alpha} ; \alpha \in S\cap T) = \{ 0\}.
\end{equation}
So we have $\mathfrak{t}\cap \mathfrak{l}=\{ 0\}$ in both cases. 

Let $L$ and $K$ be the connected algebraic subgroups of $Q_{S,T}$ whose Lie algebras are
$\mathfrak{l}$ and $\mathfrak{k}$ respectively. Then $Q_{S,T}=KL $. Let us write $\sigma = \sigma_{K}  \sigma_{L}$
where $\sigma_{K} \in K$ and $\sigma_{L} \in L$. 

Let $x \in\mathfrak{t}$. Then $x=x_{l} + x_{a}$ where $x_{l} \in \mathfrak{l}$ and $x_{a} \in \mathfrak{a}$.
Since $[\mathfrak{l}, \mathfrak{a}]=\{ 0\}$ and $[\mathfrak{q}_{S,T}, \mathfrak{q}_{S,T} ] =  \mathfrak{l} \oplus \mathfrak{u}$,
we have $\sigma_{L} (x_{a})=x_{a}$, and 
$$
\sigma (x) = 
\sigma_{K} (\sigma_{L} (x_{l} + x_{a} ) )
= \sigma_{K} (\underbrace{\sigma_{L} (x_{l})}_{\in \mathfrak{l}} ) + \sigma_{K} (x_{a}) 
=  \sigma_{L} (x_{l}) + y + x_{a} + z
$$
where $y, z\in \mathfrak{u}$. But $\sigma (x) \in \mathfrak{t}$, and so
$\sigma (x) = \sigma_{L} (x_{l}) + x_{a}$ and 
$$
\sigma (x) - x  = \sigma_{L} (x_{l}) -x_{l} \in \mathfrak{t} \cap \mathfrak{l} = \{ 0\}
$$
by \eqref{eqn:intersection}. Thus $\sigma (x) = x$. 

This being true for any $x\in \mathfrak{t}$, we deduce that $\sigma \in C_{Q_{S,T}}(\mathfrak{t})$ which 
is the connected algebraic subgroup of $Q_{S,T}$ whose Lie algebra is $C_{\mathfrak{q}_{S,T}}(\mathfrak{t})$.

By definition, we have 
$$
C_{\mathfrak{q}_{S,T}}(\mathfrak{t}) = \mathfrak{h} \oplus \mathfrak{m}.
$$
Let $X\in C_{\mathfrak{q}_{S,T}}(\mathfrak{t})$, then by Lemma \ref{lemma:e} and 
the identities \eqref{eqn:q*}, we have
$$
X.\Gothice =\{ 0\} \ , \ X.f_{\mathbf{a}} \in \mathfrak{m}^{*} \oplus \Gothicd.
$$
It follows that $\sigma (g) = g$ for all $g\in \Gothice$, and 
$\sigma (f_{\mathbf{a}}) - f_{\mathbf{a}} \in  \mathfrak{m}^{*} \oplus \Gothicd$.

Writing $f= f_{\mathbf{a}} + g$ where $g\in W_{\mathbf{a}}=\Gothice$ (since $\Gamma_{0}$ is empty),
we have 
$$
\sigma (f) - f = \sigma (f_{\mathbf{a}}) + g - f =
\sigma (f_{\mathbf{a}}) - f_{\mathbf{a}} \in ( \mathfrak{m}^{*} \oplus \Gothicd )\cap \Gothice 
=\{ 0\}.
$$
Hence $\sigma (f_{\mathbf{a}})=f_{\mathbf{a}}$, and $\sigma (f)=f$. So condition $(\mathrm{C}_{3})$
is verified by $\mathcal{O}_{\mathbf{a}}$.
\end{proof}

The hypothesis of Theorem \ref{result} is clearly satisfied when $S$ or $T$ is empty. We have the 
following result.

\begin{corollary}\label{slice:borel}
An affine slice exists for the coadjoint action of a Borel subaglebra.
\end{corollary}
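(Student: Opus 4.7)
The plan is to apply Theorem \ref{result} directly after recognizing that a Borel subalgebra has the form $\mathfrak{q}_{S,T}$ for a particularly simple choice of $S$ and $T$. Specifically, the standard Borel $\mathfrak{h}\oplus\mathfrak{g}^{R_{+}}$ is exactly $\mathfrak{q}_{\Pi,\emptyset}$, and every Borel subalgebra is conjugate to this one, so it suffices to exhibit an affine slice for $\mathfrak{q}_{\Pi,\emptyset}$.

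First, I would unpack the sets $\Gamma$, $\Gamma_{0}$, and $\Gamma_{1}$ in this case. Taking $S=\Pi$ and $T=\emptyset$, we have $\mathcal{K}(T)=\emptyset$, so
$$\Gamma = \{\varepsilon_{K} : K\in\mathcal{K}(\Pi)\} \subset R_{+}.$$
Since $\Gamma$ consists entirely of positive roots, $\Gamma\cap-\Gamma=\emptyset$, and consequently $\Gamma_{0}=\emptyset$ and $\Gamma=\Gamma_{1}$.

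Next, I would verify linear independence of $\Gamma$. By the remark following the definition of the cascade in \ref{cascade}, the roots $\varepsilon_{K}$, $K\in\mathcal{K}(\Pi)$, are pairwise strongly orthogonal. A set of pairwise strongly orthogonal (in particular, pairwise orthogonal) non-zero vectors in the Euclidean space spanned by the roots is automatically linearly independent. Hence $\Gamma$ is a linearly independent set of roots.

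Both hypotheses of Theorem \ref{result} are therefore satisfied, so there exists $\mathbf{a}\in(\Bbbk\setminus\{0\})^{\Gamma}$ for which $V_{\mathbf{a}}$ is an affine slice for the coadjoint action of $\mathfrak{q}_{\Pi,\emptyset}$. Since any Borel subalgebra is $G$-conjugate to $\mathfrak{q}_{\Pi,\emptyset}$, the translated affine subspace is an affine slice for it, which completes the proof. There is essentially no obstacle here: the corollary is a one-line specialization, with the only minor observation being the well-known fact that strong orthogonality implies linear independence.
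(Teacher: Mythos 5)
Your proof is correct and follows exactly the paper's route: the paper likewise observes that for a Borel one may take $S=\Pi$, $T=\emptyset$, so that $\Gamma=\Gamma_{1}$ consists of the pairwise strongly orthogonal (hence linearly independent) cascade roots of $\Pi$, and Theorem \ref{result} applies. Your explicit verification of $\Gamma_{0}=\emptyset$ and of linear independence via orthogonality is just a spelled-out version of the paper's one-line remark that the hypothesis of Theorem \ref{result} is clearly satisfied when $S$ or $T$ is empty.
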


\begin{remark}
When $\mathfrak{g}$ is simple of type $A_{\ell}$, then any (except one when $\ell$ is odd)
minimal parabolic subalgebra of $\mathfrak{g}$ verifies the hypotheses of Theorem \ref{result}.
So an affine slice exists for the coadjoint action of these minimal parabolic subalgebras.
\end{remark}

\begin{remark}
When $\Gamma$ is a linearly independent set with $\Gamma_{0}$ non empty, $V_{\mathbf{a}}$
is not in general an affine slice for any $\mathbf{a}$. Take for example $S=T=\Pi$, then $V_{\mathbf{a}}$
is a Cartan subalgebra and therefore condition $(\mathrm{C}_{3})$ is not verified. 
\end{remark}

\subsection{}
We finish the paper by establishing the claim in the introduction that the existence of an affine slice
for the coadjoint action of a Lie algebra $\mathfrak{g}$ implies that the field of $G$-invariant rational 
functions on $\mathfrak{g}^{*}$ is a purely transcendental extension of  $\Bbbk$. 

\begin{theorem}\label{extension}
Let $\mathcal{S}$ be an affine slice for the coadjoint action of a Lie algebra $\mathfrak{g}$, and 
denote by $G$ the algebraic adjoint group of $\mathfrak{g}$.
\begin{itemize}
\item[a)] The field of $G$-invariant rational fonctions on $\mathfrak{g}^{*}$ is a purely transcendental 
extension of $\Bbbk$. 
\item[b)] There exists an open subset $U$ of $\mathcal{S}$ such that 
the ring of regular functions on $U$ is isomorphic to the ring of $G$-invariant regular functions on $G.U$.
\end{itemize}
\end{theorem}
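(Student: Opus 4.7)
The plan is to use Rosenlicht's theorem to obtain a $G$-stable dense open subset $\Omega$ of $\mathfrak{g}^{*}$ admitting a geometric quotient $\pi : \Omega \to \Omega/G$, and then to show that the restriction of $\pi$ to a suitable open subset of $\mathcal{S}$ is birational onto its image. Writing $\mathcal{O}$ for the open subset of $\mathcal{S}$ furnished by the definition of affine slice, condition $(\mathrm{C}_{1})$ prevents $\mathcal{O}$ from lying inside the $G$-stable proper closed subset $\mathfrak{g}^{*}\setminus \Omega$, so after replacing $\mathcal{O}$ by $\mathcal{O}\cap \Omega$ I may assume $\mathcal{O}\subset \Omega$. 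Then $\pi|_{\mathcal{O}}:\mathcal{O}\to \Omega/G$ is injective by $(\mathrm{C}_{3})$ and dominant by $(\mathrm{C}_{1})$.

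The key technical ingredient will be a dimension count, for which I would consider the morphism $\Phi : G\times \mathcal{O} \to \mathfrak{g}^{*}$, $(\sigma,f)\mapsto \sigma.f$. By $(\mathrm{C}_{1})$ it is dominant and hence, in characteristic zero, generically smooth. Equivariance reduces the computation of the differential to points of the form $(e,f)$, where its image is $\mathrm{T}_{f}(G.f)+\mathrm{T}_{f}(\mathcal{O})$; together with $(\mathrm{C}_{2})$ this gives $\dim G.f + \dim \mathcal{O} = \dim \mathfrak{g}^{*}$ for $f$ generic in $\mathcal{O}$, and hence $\dim \mathcal{O} = \dim (\Omega/G)$. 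Thus $\pi|_{\mathcal{O}}$ is a dominant injective morphism between irreducible varieties of the same dimension, and in characteristic zero it is therefore birational, yielding
$$
\Bbbk(\mathfrak{g}^{*})^{G} = \Bbbk(\Omega)^{G} = \Bbbk(\Omega/G) \cong \Bbbk(\mathcal{O}) = \Bbbk(\mathcal{S}).
$$
Since $\mathcal{S}$ is an affine space, the last field is purely transcendental over $\Bbbk$, which is a).

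For b), the birationality of $\pi|_{\mathcal{O}}$ furnishes a dense open $U\subset \mathcal{O}\subset \mathcal{S}$ on which $\pi$ becomes an open immersion, with image $V$ open in $\Omega/G$. Setting $W := \pi^{-1}(V) = G.U$, which is open in $\mathfrak{g}^{*}$, the geometric quotient property gives $\Bbbk[G.U]^{G} = \Bbbk[W]^{G} = \Bbbk[V] \cong \Bbbk[U]$, as required. I expect the main obstacle to be the dimension equality in the second paragraph: once the codimension of $\mathcal{O}$ is pinned down via $(\mathrm{C}_{1})$ and $(\mathrm{C}_{2})$, the remainder is a formal consequence of Rosenlicht's theorem together with the standard fact that an injective dominant morphism between irreducible varieties of the same dimension is birational in characteristic zero.
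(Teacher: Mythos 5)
Your proposal is correct and follows essentially the same route as the paper: Rosenlicht's theorem, then showing that the quotient map restricted to a suitable open subset of $\mathcal{S}$ is injective (by $(\mathrm{C}_{3})$) and dominant (by $(\mathrm{C}_{1})$), hence birational, which yields both a) and b). The only real difference is cosmetic: the paper deduces birationality from injectivity plus normality of an open subset of an affine space via a Zariski-main-theorem type result, whereas you insert a dimension count using $(\mathrm{C}_{2})$ and generic smoothness — a correct but redundant step, since an injective dominant morphism of irreducible varieties is automatically quasi-finite, forcing equal dimensions and (in characteristic zero) birationality without any appeal to $(\mathrm{C}_{2})$.
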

\begin{proof}
By Rosenlicht's Theorem \cite{Rosen}, there exists a non-empty $G$-stable open subset $\mathcal{U}$ of $\mathfrak{g}^{*}$
such that a geometric quotient $\mathcal{U}/G$ exists. Let us denote
$$
\pi : \mathcal{U} \rightarrow \mathcal{U}/G
$$
this quotient morphism. Recall that $\pi$ is an open morphism.

Let $\mathcal{O}$ be an open subset of $\mathcal{S}$ verifying the
conditions ($\mathrm{C}_{1}$), ($\mathrm{C}_{2}$), ($\mathrm{C}_{3}$). 
In particular, $\overline{G.\mathcal{O}} = \mathfrak{g}^{*}$. So $G.\mathcal{O}$ contains 
a non-empty $G$-stable open subset $\mathcal{W}$ of $V$. 

Set $\Omega=\mathcal{U}\cap \mathcal{W}$. Then $\Omega$ is a non-empty 
$G$-stable open subset of $\mathfrak{g}^{*}$, and $\Omega\cap \mathcal{S}$ is a non-empty open subset 
of $\mathcal{S}$ verifying
$$
G. (\Omega \cap \mathcal{S})=\Omega.
$$

Consider the morphism 
$$
\Phi : \Omega \cap \mathcal{S} \rightarrow \mathcal{U}/G \ , \  x\mapsto  \pi (x). 
$$
By our construction of $\Omega$, $\Phi$ is injective and its image
is $\pi (\Omega )$. It follows that $\Phi$ is dominant ($\pi$ being open). 

Being a non-empty open subset of an affine space, $\Omega\cap \mathcal{S}$ is normal, 
and hence by \cite[Corollary 17.4.4]{TYbook} and the injectivity of $\Phi$, we deduce that $\Phi$ is a birational
equivalence. Thus
we have the following isomorphism of rational functions 
$$
\mathbf{R}( \Omega \cap \mathcal{S} ) \simeq \mathbf{R}(\mathcal{U}/G) \simeq \mathbf{R}(\mathcal{U})^{G}  = 
\mathbf{R}(\mathfrak{g}^{*})^{G} 
$$
by \cite[Proposition 25.3.6]{TYbook}.

Since $\mathbf{R}(\Omega\cap \mathcal{S})=\mathbf{R}(\mathcal{S})$ is the field of fractions of 
a polynomial algebra over $\Bbbk$, we have part a).

Part b) is a direct consequence of the fact that $\Phi$ is a birational equivalence and $\pi$
is a geometric quotient.
\end{proof}

\begin{remark}
Of course, we may generalize the notion of an affine slice to any finite-dimensional $\mathfrak{g}$-module, and 
Theorem \ref{extension} remains valid for any $\mathfrak{g}$-module admitting an affine slice.
\end{remark}

\end{document}